\theoremstyle{plain}
\newtheorem{lemma}{Lemma}[section]
\newtheorem{theorem}[lemma]{Theorem}
\theoremstyle{definition}
\newtheorem{definition}[lemma]{Definition}
\theoremstyle{remark}
\newtheorem{problem}{Open Problem}
\newcommand{\R}{\mathbbm{R}}
\newcommand{\Q}{\mathbbm{Q}}
\newcommand{\Z}{\mathbbm{Z}}
\DeclareMathOperator{\aff}{\mathrm{aff}}
\DeclareMathOperator{\boundary}{bd}
\newcommand{\fset}{\mathcal{F}}
\newcommand\Fset[2]{\fset^{#1}_{#2}}
\newcommand\PiFset[3]{\Pi_{#2}(\fset^{#1}_{#3})}
\begin{document}

\title{Semi-algebraic sets of $f$-vectors}

\author[Sj\"oberg]{Hannah Sj\"oberg} 
\address{Institute of Mathematics, Freie Universität Berlin, Arnimallee 2, 14195 Berlin, Germany}
\email{sjoberg@math.fu-berlin.de}

\author[Ziegler]{G\"unter M. Ziegler} 
\address{Institute of Mathematics, Freie Universität Berlin, Arnimallee 2, 14195 Berlin, Germany}
\email{ziegler@math.fu-berlin.de}

\thanks{Research by the second author was supported by the DFG Collaborative Research Center TRR~109 ``Discretization in Geometry and Dynamics.''}
\date{October 18, 2018}

\thanks{Dedicated to Anders Bj\"orner on occasion of his 70th birthday}

\begin{abstract}\noindent%
Polytope theory has produced a great number of remarkably simple and complete 
characterization results for face-number sets or $f$-vector sets of classes of polytopes.
We observe that in most cases these sets can be described as the
intersection of a semi-algebraic set with an integer lattice.
Such \emph{semi-algebraic sets of lattice points} have not received 
much attention,
which is surprising in view of a close connection to Hilbert's Tenth problem, which deals with their
projections. 

We develop proof techniques in order to show that, despite the observations
above, some
$f$-vector sets are \emph{not} semi-algebraic sets of lattice points.
This is then proved for 
the set of all pairs $(f_1,f_2)$ of $4$-dimensional polytopes,
the set of all $f$-vectors of simplicial $d$-polytopes for $d\ge6$, and 
the set of all $f$-vectors of general $d$-polytopes for $d\ge6$.
For the $f$-vector set of all $4$-polytopes this remains open.
%
%
\end{abstract}

\maketitle

\section{Introduction}

For any $d\ge1$, let $\Fset{d}{}\subset\Z^d$ denote the set of all $f$-vectors
$(f_0,f_1,\dots,f_{d-1})$ of $d$-dimensional polytopes.
Thus $\Fset{1}{} =\{2\}\subset\Z$ and $\Fset{2}{}=\{(n,n):n\ge3\}\subset\Z^2$. 
In 1906, Steinitz \cite{Stei3} characterized the set $\Fset{3}{}$ of $f$-vectors
$(f_0,f_1,f_2)$ of $3$-dimensional polytopes $P$ as 
\[
\Fset{3}{} = 
\big\{(f_0,f_1,f_2)\in \Z^3 : 
f_0 -f_1 + f_2= 2,\ 
f_2 \le 2f_0-4,\
f_0 \le 2f_2 -4\big\}.
\]
Thus for $d\le3$ the set $\Fset{d}{}\subset\Z^d$ has a very simple structure: 
It is the set of all integer points in a $(d-1)$-dimensional rational cone.
 
Inspired by this, Grünbaum in 1967 \cite[Sec.~10.4]{Gr1-2}
and subsequently Barnette and Reay characterized the sets
$\PiFset{4}{ij}{}$ of all pairs $(f_i,f_j)$
that occur for $4$-dimensional polytopes.
Again they got complete and reasonably simple answers: 
They found that in all cases 
this is the set of all integer points between some fairly obvious
upper and lower bounds, with finitely many exceptions.

In our work here we start with a formal definition of what we mean by
a ``simple answer'': 

\begin{definition}[Semi-algebraic sets of integer points]\label{def:semi-algebraic-integer}
A set of $A\subset\Z^d$ is a \emph{semi-algebraic set of integer points}
if it is the set of all integer points in a semi-algebraic set, 
that is, if $A=S\cap\Z^d$ for some semi-algebraic set $S\subseteq\R^d$.
\end{definition}

For this recall that a \emph{basic semi-algebraic set} is a 
subset $S\subseteq\R^d$ that can be defined by a finite 
number of polynomial equations and inequalities.
A \emph{semi-algebraic set}
is any finite union of basic semi-algebraic sets.
The semi-algebraic set is \emph{defined over $\Z$} if the polynomials can be 
chosen with integral coefficients. In this case we will call this
a \emph{$\Z$-semi-algebraic set}.
See Basu, Pollack \& Roy \cite{basu03:_algor_real_algeb_geomet} for 
background on semi-algebraic sets.

It turns out that Definition~\ref{def:semi-algebraic-integer}
is not quite general enough for $f$-vector theory,
as we need to account for modularity constraints
that may arise due to projections. For example, 
$A:=\{(x,y)\in\Z^2:x=2y\}$
is a semi-algebraic set of integer points, 
but its projection to the first coordinate $\Pi_1(A)=2\Z$ is not
if we insist that the lattice is $\Z$.
This is relevant for $f$-vector sets, as for example
every simplicial $3$-polytope satisfies $3f_2=2f_1$, so $f_2$ is even and $f_1$
is a multiple of $3$. Consequently $\PiFset{3}{2}{s}=\{4,6,8,\dots\}$,
the set of all possible facet numbers of simplicial $3$-polytopes, 
is \emph{not} a semi-algebraic set of integer points,
but it is a semi-algebraic set of lattice points:  

\begin{definition}[Semi-algebraic sets of lattice points]\label{def:semi-algebraic-lattice}
A subset $A\subset\R^d$ is a \emph{semi-algebraic set of lattice points}
if it is an intersection set of a semi-algebraic set 
with an affine lattice,
that is, if $A=S\cap\Lambda$ for a suitable semi-algebraic set $S\subseteq\R^d$
and an affine lattice $\Lambda\subset\R^d$. 
\end{definition}

Here by an \emph{affine lattice} we mean any translate of a linear lattice,
that is, a discrete subset $\Lambda\subset\R^d$
that is closed under taking affine combinations
$\lambda_1a_1+\dots+\lambda_na_n$ for $n\ge1$ with $\lambda_1,\dots,\lambda_n\in\Z$
and $\lambda_1+\dots+\lambda_n=1$.
We will only consider \emph{integer lattices}, that is, sublattices $\Lambda\subseteq\Z^d$.
Moreover, without loss of generality we may always assume that the lattice
is the affine lattice $\Lambda=\aff_\Z A$ spanned by $A$: The set of all affine
combinations yields a lattice if $A\subset\Z^d$, and the lattice $\Lambda$
has to contain $\aff_\Z A$. 

With the generality of Definition~\ref{def:semi-algebraic-lattice},
a great number of characterization results achieved in the $f$-vector theory
of polytopes imply that full $f$-vector sets or coordinate projections
(that is, single face numbers or face number pairs) are
semi-algebraic sets of lattice points.
We will summarize this in Section~\ref{sec:examples}.  

Semi-algebraic sets of lattice points $A\subset\Z^d$ are easy to
identify and to characterize for $d=1$; see the beginning of Section~\ref{sec:techniques}. 
However, already for sets in the plane $A\subset\Z^2$
this becomes non-trivial. For example, the answer depends on the
field of definition: The set $\{(x,y)\in\Z^2:y\ge\pi x\}$ 
is an $\R$-semi-algebraic set of integer points, 
but not a $\Z$-semi-algebraic set of lattice points.

Our two main results are the following:

\begin{theorem}\label{thm:Pi4_12} 
The set $\PiFset{4}{12}{}$ of pairs $(f_1,f_2)$ for $4$-dimensional polytopes
is not an $\R$-semi-algebraic set of lattice points. 
\end{theorem}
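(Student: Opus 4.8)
The plan is to locate a purely one-dimensional ``oscillation'' along the upper boundary of $\PiFset{4}{12}{}$ and then show that it is incompatible with semi-algebraicity. First I would make the region explicit. The Upper Bound Theorem gives $f_1\le\binom{f_0}{2}$, so $f_0\ge n(f_1):=\min\{k\in\Z:\binom k2\ge f_1\}=\bigl\lceil\tfrac12(1+\sqrt{1+8f_1})\bigr\rceil$; counting incidences between edges and $2$-faces and applying Steinitz's theorem to the vertex figures gives $f_2\le 2f_1-2f_0$, with equality exactly for simplicial polytopes. Hence $f_2\le g(f_1):=2f_1-2n(f_1)$ for every $4$-polytope, and for all large $f_1$ this is attained by a simplicial $4$-polytope with $n(f_1)$ vertices. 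By the characterization of Barnette and Reay (which also supplies the matching lower bound $h$, and the fact that all integer points between the two bounds occur up to finitely many exceptions), $\PiFset{4}{12}{}$ differs from $\{(f_1,f_2)\in\Z^2:h(f_1)\le f_2\le g(f_1)\}$ by a finite set; in particular, for large $f_1$ the column $\{f_2:(f_1,f_2)\in\PiFset{4}{12}{}\}$ is an integer interval whose maximum is $g(f_1)$.

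The crucial features of $g$ are that $g(f_1)$ is always even and that it closely shadows the algebraic curve $\gamma_0(t):=2t-1-\sqrt{1+8t}$: with $\phi(t):=\tfrac12(1+\sqrt{1+8t})$ one has $n(f_1)=\lceil\phi(f_1)\rceil$, so $g(f_1)-\gamma_0(f_1)=2\bigl(\phi(f_1)-\lceil\phi(f_1)\rceil\bigr)\in(-2,0]$, and therefore $g(f_1)\in\{\lfloor\gamma_0(f_1)\rfloor,\ \lfloor\gamma_0(f_1)\rfloor-1\}$. Since $\phi(t+1)-\phi(t)\to0$, the fractional part of $\phi(f_1)$ sweeps through $[0,1)$ in blocks of length $\sim\sqrt{f_1}$, and a short computation shows that $g(f_1)=\lfloor\gamma_0(f_1)\rfloor$ for $f_1$ in the upper part of each interval $(\binom{n-1}2,\binom n2\,]$ and $g(f_1)=\lfloor\gamma_0(f_1)\rfloor-1$ in the lower part --- an oscillation between the two lattice lines flanking $\gamma_0$ whose blocks grow without bound.

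Now suppose for contradiction that $\PiFset{4}{12}{}=S\cap\Lambda$ with $S\subseteq\R^2$ semi-algebraic and $\Lambda$ an integer lattice. Using a few explicit $f$-vectors (the simplex, a cyclic $4$-polytope, its dual, and a simplex-times-segment) one checks that $\aff_\Z\PiFset{4}{12}{}=\Z^2$, so we may take $\Lambda=\Z^2$. Because the columns of $\PiFset{4}{12}{}$ are eventually intervals, its upper boundary $B:=\PiFset{4}{12}{}\setminus\bigl(\PiFset{4}{12}{}-(0,1)\bigr)$ equals, up to a finite set, the graph $\{(f_1,g(f_1)):f_1\ge M\}$; and $B=\bigl(S\setminus(S-(0,1))\bigr)\cap\Z^2$ with $S\setminus(S-(0,1))$ semi-algebraic, so $B$ would be a semi-algebraic set of integer points \emph{with respect to $\Z^2$}. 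The contradiction is that it is not --- although it \emph{is} one with respect to the proper sublattice $\Z\times 2\Z$ (realized there by $\{(x,y):(x-\tfrac y2-1)(x-\tfrac y2-2)<2x\le(x-\tfrac y2)(x-\tfrac y2-1)\}$). So the argument has to exploit genuinely that all of $B$ lies in $\Z\times 2\Z$ while the witnessing semi-algebraic set is required to live over the finer lattice $\Z^2$; this parity/lattice subtlety is exactly why the statement is nontrivial.

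The hard part, which I expect to be the main obstacle, is proving that no semi-algebraic $S\subseteq\R^2$ satisfies $S\cap\Z^2=\{(f_1,g(f_1)):f_1\ge M\}$. Here I would take a cylindrical algebraic decomposition of $S$ adapted to the projection onto the $f_1$-axis: over the unbounded cell, $S$ is a finite union of open bands and graphs of finitely many algebraic section functions. The requirement that $S$ meet each large integer column in the single point $g(f_1)$, which lies in the width-$2$ strip $\{\gamma_0(x)-2<y\le\gamma_0(x)\}$, together with the fact that an algebraic function which is integer-valued at all large integers is eventually polynomial, should force $g$ to coincide on a cofinite set of integers with $\lfloor\zeta\rfloor$, $\lceil\zeta\rceil$, or a bounded integer translate thereof, for one fixed algebraic function $\zeta$. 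But any such identity makes the quantity $2\bigl(\phi(f_1)-\lceil\phi(f_1)\rceil\bigr)$ stay within bounded distance of the semi-algebraic function $\zeta-\gamma_0$; the latter is eventually monotone with a limit in $[-\infty,+\infty]$, whereas $2\bigl(\phi(f_1)-\lceil\phi(f_1)\rceil\bigr)$ returns arbitrarily close to $0$ and reaches down to $-2$ infinitely often and so cannot be confined to any interval of length $<2$ --- a contradiction. The delicate work is the cell-decomposition bookkeeping needed to exclude defining sets $S$ with gaps that could artificially displace the topmost integer in a column; once that is in place, the oscillation argument itself is short.
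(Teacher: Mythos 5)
Your route is genuinely different from the paper's: you dualize to the \emph{upper} boundary $f_2\le 2f_1-2f_0$ (your incidence-count plus Steinitz argument for this inequality is correct), extract the top layer via the difference $S\setminus(S-(0,1))$, and aim to show that the single graph $B=\{(f_1,g(f_1))\}$ with $g(f_1)=2f_1-2\lceil\phi(f_1)\rceil$ is not of the form $T\cap\Z^2$, using the equidistribution of the offset $g-\gamma_0=2(\phi-\lceil\phi\rceil)\in(-2,0]$. The paper instead works on the lower boundary, reparametrizes the ceiling condition by integer pairs $(i,j)$ (Lemma~\ref{lemma_curves}), and shows that every substrip of a strip of curves $\gamma_r$ contains infinitely many lattice points of both types (Lemma~\ref{gammacurves}), so that the Strip Lemma (Lemma~\ref{lemma:strip}) applies. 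Your reduction to a one-dimensional graph is attractive and I believe it can be completed, and you correctly identify the lattice subtlety ($B\subset\Z\times 2\Z$ \emph{is} semi-algebraic with respect to $\Z\times2\Z$, so one must use the odd-$y$ integer points as witnesses against any $T$ over $\Z^2$; note this also means the paper's Strip Lemma cannot be quoted verbatim for $B$, since it is stated with $\Lambda=\aff_\Z L$, and $\aff_\Z B=\Z\times2\Z$ --- you would have to re-run its proof with the larger lattice $\Z^2$).

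There are, however, two genuine gaps. First and decisively, the step you label ``the hard part'' --- that no semi-algebraic $S\subseteq\R^2$ satisfies $S\cap\Z^2=\{(f_1,g(f_1)):f_1\ge M\}$ --- is the entire content of the theorem, and you only sketch it. What must actually be proved is: (i) any two-dimensional cell of a decomposition of $S$ that contains infinitely many points of $B$ must, because the offsets of $B$ from $\gamma_0$ are dense in $(-2,0]$ while the offsets of the interleaved odd-$y$ points are dense in a translate of that interval, also contain infinitely many integer points outside $B$; and (ii) finitely many algebraic section curves $\zeta_1,\dots,\zeta_k$, each with $\zeta_i-\gamma_0$ eventually monotone with limit $\ell_i$, can only carry those $(x,g(x))$ whose offset tends to some $\ell_i$, which by equidistribution misses infinitely many $x$. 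This ``finitely many asymptotic offsets versus a dense set of offsets'' argument is precisely what the paper's Strip Lemma packages; without it your proof is a plan, not a proof. Second, your assertion that $\PiFset{4}{12}{}$ differs from the set of all lattice points between the two bounds ``by a finite set'' is false: Barnette's theorem excludes the infinitely many integer points on the parabola $f_2=\tfrac12 f_1+\sqrt{f_1+\tfrac{13}{4}}+2$ (and dually one near the upper boundary). Consequently your set $B=\PiFset{4}{12}{}\setminus\bigl(\PiFset{4}{12}{}-(0,1)\bigr)$ is not just the graph of $g$; it also contains the point immediately below every such hole, and these extra points lie on another algebraic curve a bounded distance below $\gamma_0$, exactly in the window your column-by-column analysis inspects. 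This is repairable (the extra points form another semi-algebraic family that can be subtracted off first, much as the paper splits off $A_1$ from $A_2$ in Theorem~\ref{thm:setA}), but as written the identification of $B$ with the graph of $g$ is incorrect.
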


\begin{theorem}\label{thm:Fd}
For any $d\ge 6$,
the set $\Fset{d}{}$ of all $f$-vectors of $d$-dimensional polytopes is 
not an $\R$-semi-algebraic set of lattice points. 
\end{theorem}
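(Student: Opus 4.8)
The plan is to reduce Theorem~\ref{thm:Fd} to the analogous statement for \emph{simplicial} polytopes by a single rational hyperplane slice, and then to settle the simplicial case via the $g$-theorem by cutting out a two-dimensional slice on which the $f$-vector set becomes the subgraph of a Macaulay pseudopower.

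For the reduction, I would use the following elementary incidence count. In any $d$-polytope $P$ every ridge lies in exactly two facets, so $\sum_F f_{d-2}(F)=2f_{d-2}(P)$, where $F$ ranges over the facets of $P$; and every $(d-1)$-polytope has at least $d$ facets, with equality precisely for the simplex. Hence $2f_{d-2}(P)\ge d\,f_{d-1}(P)$, with equality if and only if every facet of $P$ is a simplex, that is, $P$ is simplicial. Therefore, writing $H\subset\R^d$ for the rational affine hyperplane $\{x:2x_{d-2}=d\,x_{d-1}\}$, we get $\Fset{d}{}\cap H=\Fset{d}{s}$. By the transfer results of Section~\ref{sec:techniques} the intersection of an $\R$-semi-algebraic set of lattice points with a rational affine subspace is again one, so it suffices to show that $\Fset{d}{s}$ is \emph{not} an $\R$-semi-algebraic set of lattice points for $d\ge6$.

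For the simplicial case, the $g$-theorem provides a rational affine isomorphism $\phi$ from $\R^{\lfloor d/2\rfloor}$ (coordinates $g_1,\dots,g_{\lfloor d/2\rfloor}$) onto the Dehn--Sommerville affine subspace of $\R^d$ with $\Fset{d}{s}=\phi\bigl(\{(g_1,\dots,g_{\lfloor d/2\rfloor})\in\Z^{\lfloor d/2\rfloor}:(1,g_1,g_2,\dots)\text{ is an }M\text{-sequence}\}\bigr)$. Since $d\ge6$ forces $\lfloor d/2\rfloor\ge3$, intersect with the rational plane $\{g_1=g_2\}\cap\{g_j=0:4\le j\le\lfloor d/2\rfloor\}$. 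On this plane the $M$-sequence conditions reduce to a single nontrivial one: $g_2=g_1\ge0$ is free, the inequality $g_2\le\binom{g_1+1}{2}$ is automatic when $g_2=g_1$, the conditions $g_{j+1}\le g_j^{\langle j\rangle}$ for $j\ge3$ hold trivially since $g_j=0$ there, and one is left with $0\le g_3\le g_2^{\langle2\rangle}$. Thus $\phi$ carries this plane to a rational affine plane $L$ with $\Fset{d}{s}\cap L$ a rational affine copy of the pseudopower subgraph $\{(x,y)\in\Z^2:x\ge0,\ 0\le y\le x^{\langle2\rangle}\}$; by the lemma of Section~\ref{sec:techniques} this is not an $\R$-semi-algebraic set of lattice points, and the theorem follows.

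The short steps above are essentially bookkeeping once two inputs are in place: the $g$-theorem, and the fact --- the genuinely hard part, and what I expect to be the main obstacle --- that the subgraph of $x\mapsto x^{\langle2\rangle}$ is not an $\R$-semi-algebraic set of lattice points. Proving the latter means excluding \emph{every} semi-algebraic $S\subseteq\R^2$ with $S\cap\Z^2$ equal to that subgraph; the argument (carried out in Section~\ref{sec:techniques}) should run through a cylindrical decomposition, on whose unbounded base cells the top of $S$ would be a single algebraic function agreeing with $x^{\langle2\rangle}$ to within $1$ on all large integers --- impossible, because $x^{\langle2\rangle}$ minus the fixed quadratic $\tfrac12(x^2+x)$ is piecewise linear with infinitely many slopes tending to $-\infty$. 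One also has to be a little careful in choosing the slice so that the pseudopower genuinely survives: a less careful cut (for instance fixing $g_1$ or fixing $g_3$) leaves only polynomial inequalities and yields a semi-algebraic set. Finally, the hypothesis $d\ge6$ is precisely what yields $\lfloor d/2\rfloor\ge3$, hence a third free $g$-coordinate and the pseudopower $g_2^{\langle2\rangle}$; for $d\le5$ the only nontrivial $M$-sequence constraint is the polynomial inequality $g_2\le\binom{g_1+1}{2}$, so this route produces no pseudopower.
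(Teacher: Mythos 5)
Your reductions coincide exactly with the paper's: there, too, Theorem~\ref{thm:Fd} is obtained by restricting $\Fset{d}{}$ to the hyperplane $2f_{d-2}=df_{d-1}$ to reach $\Fset{d}{s}$, passing to $g$-coordinates via the (unimodular) $g$-theorem transformation, and slicing along $g_1=g_2$, $g_j=0$ for $4\le j\le\lfloor d/2\rfloor$ to isolate the planar Macaulay set $G^d_{23}=\{(g_2,g_3)\in\Z_{\ge0}^2:g_3\le g_2^{\langle 2\rangle}\}$ (written in the paper as $\partial^3(g_3)\le g_2$); those steps are sound. The genuine gap sits in the one step you yourself flag as the main obstacle. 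A cylindrical decomposition does \emph{not} produce ``a single algebraic function agreeing with $x^{\langle 2\rangle}$ to within $1$ on all large integers.'' It produces finitely many algebraic section functions $\xi_1<\dots<\xi_m$ over the unbounded base cell, together with the requirement that for each large integer $x$ \emph{some} $\xi_i$ meets the unit interval $[x^{\langle 2\rangle},x^{\langle 2\rangle}+1]$; pigeonhole then gives one $\xi_i$ doing so only for \emph{infinitely many} integers $x$, not for all of them. That weaker conclusion is no contradiction at all: a single algebraic curve can shadow $x^{\langle 2\rangle}$ to within $1$ at infinitely many integers --- for instance $y=\tfrac12x+\tfrac13x\sqrt{2x+\tfrac14}$ passes exactly through the points $\bigl(\binom{k}{2},\binom{k+1}{3}\bigr)$, all of which lie on the graph of $x^{\langle 2\rangle}$. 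This is precisely why the paper's Curve Lemma (Lemma~\ref{lemma:curve}) fails for these sets and why the Strip Lemma (Lemma~\ref{lemma:strip}) is introduced.

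The paper closes this gap in Lemma~\ref{lemma:g2g3} by a two-parameter version of your argument: it takes the whole strip of translates of $\gamma_0:\ g_3=\tfrac12g_2+\tfrac13g_2\sqrt{2g_2+\tfrac14}$ by $t\cdot(1,1)$, $t\in[0,1]$, and verifies that \emph{every} substrip contains infinitely many lattice points with $g_2=\binom{k}{2}$ (which satisfy $\partial^3(g_3)\le g_2$) and infinitely many with $g_2=\binom{k}{2}+1$ (which violate it). Since the finitely many unbounded branches of $\partial S$ are each eventually confined to, or asymptotic to, at most one translate $\gamma_t$, some substrip is eventually avoided by all of them, forcing all far-out lattice points of that substrip to one side of $S$ --- a contradiction. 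Your observation that $x^{\langle 2\rangle}-\binom{x+1}{2}$ is piecewise linear with slopes $-\binom{n}{2}\to-\infty$ is correct and captures the same oscillation phenomenon, but to convert it into a proof you must exclude every \emph{finite family} of algebraic arcs simultaneously, not just a single one; that is exactly what the strip argument accomplishes, and without it (or an equivalent device) your final step does not go through.
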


In Section~\ref{sec:techniques} we develop proof techniques, including
the ``\nameref{lemma:strip}.'' Based on this,
the proof of Theorem~\ref{thm:Pi4_12} is given in Section~\ref{sec:dim4} and
the proof of Theorem~\ref{thm:Fd} in Section~\ref{sec:dim6}.%
\medskip

\noindent
Coordinate projections of semi-algebraic sets of lattice points
are not in general again semi-algebraic.
Indeed, Matiyasevich's Theorem \cite{Matiyasevich} (see Davis \cite{Davis:Hilbert10}
and Matiyasevich \cite{Matijasevic}) 
states that all recursively enumerable sets of integer points are Diophantine sets. 
Matiyasevich proved that every Diophantine set has \emph{rank} 
at most $9$ (see \cite{Jones}, and most recently Sun \cite{Sun:Hilbert10}), that is,
it is the projection of the integer points of
some semi-algebraic set defined over $\Z$ with at most $9$ additional variables.
Thus, in particular, $f$-vector sets of polytopes, like
$\Fset{d}{}$ and $\Fset{d}{s}$ for $d\ge2$, are Diophantine of rank at most~$9$,
since Grünbaum has noted in \cite[Sect.~5.5]{Gr1-2} that such sets 
are recursively enumerable.

On the other hand, 
the semi-algebraic sets of lattice points that we consider in this paper are more restrictive
than Diophantine sets: 
We are interested in the cases when a set cannot be described as the set of integer points of a semi-algebraic set
(defined over $\Z$ or $\R$) without additional variables.
With the proof of Theorem~\ref{thm:Pi4_12},
we will see that, for example, the set $\PiFset{4}{12}{}$ 
can be described using one additional variable (if we allow for 
inequalities, which is not usual in the context of Diophantine sets,
but equivalent).
The same is true for the $f$-vector set of simplicial $6$-polytopes, $\Fset{6}{s}$.

In summary, we will show that many $f$-vector sets are semi-algebraic
(Section~\ref{sec:examples}), while some are not (Theorems \ref{thm:Pi4_12} and \ref{thm:Fd}).
The crucial question that remains open concerns dimensions $4$ and~$5$:

\begin{problem}
	Is the $f$-vector set of $4$-polytopes $\Fset{4}{}\subset\Z^4$ semi-algebraic?
\end{problem}

(The size/fatness projection of $\Fset{4}{}$ displayed and discussed 
by Brinkmann \& Ziegler~\cite{Z157} suggests that the answer is no.)

\begin{problem}
Is the $f$-vector set of $5$-polytopes $\Fset{5}{}\subset\Z^5$ semi-algebraic?
\end{problem}

We refer to Grünbaum \cite[Chap.~8-10]{Gr1-2} and Ziegler \cite[Lect.~8]{Z35} for
general information and further references on polytopes and their $f$-vectors.
The lattices spanned by $f$-vector sets, as well as more general additive
(semi-group) structures on them, are discussed in Ziegler \cite{Z165}.

\section{Semi-algebraic sets of \texorpdfstring{$\boldsymbol{f}$}{f}-vectors}\label{sec:examples}

The following theorem summarizes a great number of works in $f$-vector theory,
started by Steinitz in 1906 \cite{Stei3} and re-started by Grünbaum in 1967 \cite[Chap.~8-10]{Gr1-2}.  

\begin{theorem}
The following sets of face numbers, face number pairs, and $f$-vectors,
are $\Z$-semi-algebraic sets of lattice points:
\begin{enumerate}[\rm(i)]
\item \label{thm:part-01 d le 3} 
	$\Fset{d}{}$, the set of $f$-vectors of $d$-dimensional polytopes for $d\le 3$,
\item \label{thm:part-02 d le 3 simplicial} 
	$\Fset{d}{s}$ and $\Fset{d}{s*}$, the sets of $f$-vectors of 
	simplicial and of simple $d$-dimensional polytopes for $d\le 5$, 
\item \label{thm:part-03 cs 3-polytopes}
	$\Fset{3}{cs}{}$, the set of $f$-vectors of $3$-dimensional centrally-symmetric polytopes, 
\item \label{thm:part-04 i-faces not diagonal}
	$\PiFset{d}{i}{}$, the sets of numbers of $i$-faces of $d$-polytopes 
	for all $d$ and $i$, 	
\item \label{thm:part-06 i-faces simplicial}
	$\PiFset{d}{i}{s}$ and~$\PiFset{d}{i}{s*}$,
	the sets of numbers of\/ $i$-faces of simplicial and of simple $d$-polytopes,
\item \label{thm:part-07 2dim 4-polytopes}
	$\PiFset{4}{01}{}$, $\PiFset{4}{02}{}$, and $\PiFset{4}{03}{}$,
	sets of face number pairs of $4$-polytopes,
\item \label{thm:part-08 01 5-polytopes}
	$\PiFset{5}{01}{}$,
	the set of pairs of ``number of vertices and number of edges'' for \mbox{$5$-polytopes},	
\item \label{thm:part-09 f0 cubical d-polytopes}
	$\PiFset{d}{0}{cub}$, the set of vertex numbers of cubical $d$-polytopes, 
	for $d\le4$ and for all even dimensions $d$, 
\item \label{thm:part-10}
	$\PiFset{4}{0}{2s2s}$, the set of vertex numbers of $2$-simplicial $2$-simple $4$-polytopes,
	and
\item \label{thm:part-11}
    $\PiFset{d}{0,d-1}{}$, the set of pairs of ``number of vertices and number of facets''
	of $d$-polytopes, for even dimensions $d$. 
\end{enumerate}
\end{theorem}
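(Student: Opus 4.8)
The plan is to reduce the whole theorem to one normal-form observation and then to verify, item by item, that each quoted characterization result is in that form.

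\emph{A normal form.} I would first record that a set $A\subseteq\Z^d$ is a $\Z$-semi-algebraic set of lattice points as soon as it can be written as
\[
A=\big(\{x\in\aff_\Z A:\ p_1(x)\ge 0,\ \dots,\ p_k(x)\ge 0,\ q_1(x)=0,\ \dots,\ q_\ell(x)=0\}\setminus E\big)\cup E',
\]
with $p_i,q_j\in\Z[x_1,\dots,x_d]$ and $E,E'\subset\Z^d$ finite. Indeed, every singleton $\{a\}$ with $a\in\Z^d$ is the $\Z$-semi-algebraic set $\{x:\sum_i(x_i-a_i)^2\le 0\}$, and the $\Z$-semi-algebraic subsets of $\R^d$ form a Boolean algebra; hence a set of the displayed shape is the intersection of a $\Z$-semi-algebraic subset of $\R^d$ with the lattice $\aff_\Z A$, which is exactly Definition~\ref{def:semi-algebraic-lattice}. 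So it suffices to exhibit each of (i)--(x) in this shape.

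\emph{The elementary and the explicitly characterized items.} Item (i) is immediate: $\Fset{1}{}=\{2\}$, $\Fset{2}{}=\{(n,n):n\ge 3\}$, and $\Fset{3}{}$ is exactly the one linear equation and two linear inequalities displayed in the introduction. For items (iii), (iv), (vi), (vii), (viii), (ix) and (x) I would simply quote the corresponding characterization from the literature --- Grünbaum and Barnette--Reay and their successors for the $4$- and $5$-dimensional face-number pairs, and the cited works for centrally symmetric $3$-polytopes, for numbers of $i$-faces of general $d$-polytopes, for vertex numbers of cubical and of $2$-simplicial $2$-simple $4$-polytopes, and for vertex/facet pairs in even dimension --- and observe that in every one of these the answer has the shape ``all points of the spanned affine lattice lying in an explicitly given region cut out by finitely many linear or quadratic (in)equalities, together with at most finitely many sporadic exceptions''; for the $4$-polytope face pairs this is literally what the introduction states. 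It is worth flagging that in several items $\aff_\Z A$ is a \emph{proper} sublattice of $\Z^d$, owing to divisibility or parity constraints --- as for numbers of $i$-faces of simplicial polytopes (recall $\PiFset{3}{2}{s}=\{4,6,8,\dots\}$ from the introduction) or for vertex/facet pairs in even dimension --- which is precisely why Definition~\ref{def:semi-algebraic-lattice}, rather than Definition~\ref{def:semi-algebraic-integer}, is the right notion here. The Dehn--Sommerville relations, being linear, contribute only among the equations~$q_j$.

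\emph{The $g$-theorem items, and the main obstacle.} Items (ii) and (v) are the only ones that need an argument rather than a citation, and here lies the crux. The $g$-theorem of Billera--Lee and Stanley characterizes $\Fset{d}{s}$ as the set of integer vectors $f$ obeying the linear Dehn--Sommerville relations together with the condition that the $g$-vector $(g_0,g_1,\dots,g_{\lfloor d/2\rfloor})$ --- an explicit integer-linear image of $f$ --- be an $M$-sequence in Macaulay's sense. For general $d$ this last condition is \emph{not} semi-algebraic, since Macaulay's bound $g_{i+1}\le g_i^{\langle i\rangle}$ is only piecewise polynomial in $g_i$, with infinitely many pieces; this is exactly why $\Fset{d}{s}$ fails to be semi-algebraic once $d\ge 6$, and why item (ii) must stop at $d\le 5$. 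For $d\le 5$, however, $\lfloor d/2\rfloor\le 2$, so the $g$-vector has the form $(1,g_1,g_2)$ and ``$M$-sequence'' collapses to the two polynomial conditions $g_1\ge 0$ and $0\le g_2\le\binom{g_1+1}{2}$; substituting the linear expressions for $g_1,g_2$ in $f$ puts $\Fset{d}{s}$ into the normal form above, and passing to the reversed $f$-vector does the same for the simple polytopes $\Fset{d}{s*}$. For the single-face-number projections in (v) I would fall back on the explicit one-dimensional descriptions in the cited literature (each an arithmetic progression truncated below, with finitely many exceptions). Note that Tarski--Seidenberg alone would \emph{not} suffice here or for the other projections among (iv)--(x): a coordinate projection of a semi-algebraic set of lattice points need not again be one, as $\{(x,y):x=2y\}$, projecting to $2\Z$, already shows. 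So the real substance of the theorem is the pleasant fact that all these classical characterizations do land in the normal form above; beyond the $g$-theorem collapse the remaining work is the bookkeeping of the previous paragraph --- above all, confirming that no cited result secretly hides an infinite family of exceptions or a non-eventually-polynomial bound, which is guaranteed by their being ``complete and reasonably simple'' in the sense of the introduction.
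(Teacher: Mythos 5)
Your overall framing (a normal form ``lattice points in a region cut out by finitely many integer polynomial (in)equalities, modulo finitely many exceptions,'' plus the observation that $\Z$-semi-algebraic sets form a Boolean algebra containing all singletons) is sound, and your treatment of item (\ref{thm:part-02 d le 3 simplicial}) via the collapse of the $M$-sequence condition to $g_1\ge 0$, $0\le g_2\le\binom{g_1+1}{2}$ for $\lfloor d/2\rfloor\le 2$ is exactly what the paper does. But there is a genuine gap in item (\ref{thm:part-04 i-faces not diagonal}), which you file under ``simply quote the literature.'' There is no citable characterization of $\PiFset{d}{i}{}$ for all $d$ and $i$. What the literature (Bj\"orner--Linusson) gives is the statement for \emph{simple} $d$-polytopes --- namely numbers $N(d,i)$ and $G(d,i)$ with $G(d,i)=1$ for $i\ge\lfloor\tfrac{d+1}{2}\rfloor$ --- and, dually, for simplicial ones with $G=1$ for $i\le\lfloor\tfrac{d+1}{2}\rfloor-1$. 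Combining the two covers $\PiFset{d}{i}{}$ for every pair \emph{except} odd $d$ with $i=\tfrac{d-1}{2}$, where neither subclass realizes all large integers: one derives $G(2i+1,i)=p$ whenever $i+2=p^s$ is a prime power. To show that \emph{general} $(2i+1)$-polytopes nevertheless attain every sufficiently large number of $i$-faces, the paper must construct a $(2i+1)$-polytope $P$ with a simplex facet and $\gcd(f_i(P),p)=1$ (via bipyramids, prisms, pyramids and an $(i+1)$-fold connected sum of the form $P_2\#P_1\#\cdots\#P_1$, split into the cases $p=2$ and $p$ odd), and then fill in all large values using Eckhoff's connected-sum machinery. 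This construction is the bulk of the paper's proof of the theorem and is entirely absent from your proposal; note that the case $d=3$, $i=1$ (edge numbers of $3$-polytopes, where $7$ is the unique gap) is the first instance of this phenomenon, so the issue is not vacuous.

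Two smaller points. For item (\ref{thm:part-09 f0 cubical d-polytopes}) in even dimensions the paper likewise does not merely cite: it combines the Blind--Blind parity theorem and their elementary cubical polytopes $C^d_k$ with a gluing construction to realize all sufficiently large even vertex numbers; this is short but is an argument, not a quotation. Finally, your remark that $\Fset{d}{s}$ fails to be semi-algebraic for $d\ge 6$ because Macaulay's bound $g_3\le g_2^{\langle 2\rangle}$ is ``only piecewise polynomial with infinitely many pieces'' is a plausible heuristic but not a proof of non-semi-algebraicity; the paper needs the full strip-lemma argument of Section~\ref{sec:dim6} for that. This does not affect the present theorem, but you should not present it as established.
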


\begin{proof}
In each case, the set in question is described as all the integers or integer points
that satisfy a number of polynomial equations, strict inequalities, non-strict inequalities,
or inequalities:
\smallskip

\noindent
(\ref{thm:part-01 d le 3})
This is Steinitz's result \cite{Stei3}, as quoted in the introduction.
In this case, the equation and inequalities are linear.
It also includes the information that the $f$-vector set of
simplicial $3$-polytopes is
$\Fset{3}{s}=\{(n,3n-6,2n-4):n\ge4\}$, which yields
the case $d=3$ of
 (\ref{thm:part-02 d le 3 simplicial}).
\smallskip

\noindent%
(\ref{thm:part-02 d le 3 simplicial})
$\Fset{4}{s}$ and $\Fset{5}{s}$ can be deduced from the $g$-Theorem (see Section~\ref{sec:dim6}):
\begin{align*}
\Fset{4}{s}=&\{(f_0,f_1,-2f_0+2f_1,-f_0+f_1)\in\Z^4 : 
f_0 \ge 5, \ 4f_0-10\le f_1\le\tfrac 1 2 f_0(f_0-1)\},\\
\Fset{5}{s}=&\{(f_0,f_1,-10f_0+4f_1+20,-15f_0+5f_1+30, -6f_0+2f_1+12)\in\Z^5 : \\
&\hphantom{\{(f_0,f_1,-2f_0+2f_1,-f_0+f_1)\in\Z^4 : \mbox{}}
f_0 \ge 6, \ 5f_0-15\le f_1\le\tfrac 1 2 f_0(f_0-1)\}.
\end{align*}

\noindent%
(\ref{thm:part-03 cs 3-polytopes})
The $f$-vector set $\Fset{3}{cs}$ of centrally-symmetric $3$-polytopes
spans the lattice $(2\Z)^3$. 
Werner {\cite[Thm.~3.3.6]{Werner-diss}} has 
described it as
\[\Fset{3}{cs}=\{(f_0,f_1,f_2)\in(2\Z)^3 : 
f_0 -f_1 + f_2= 2, f_2 \le2f_0-4, f_0 \le 2f_2 -4, f_0 + f_2 \ge 14\}.\]

\noindent%
(\ref{thm:part-04 i-faces not diagonal}),(\ref{thm:part-06  i-faces simplicial})
Bj\"orner \& Linusson \cite{BjornerLinusson} showed that
for any  integers $0\le i<d$ there are numbers $N(d,i)$ and $G(d,i)$ 
such that there is a simple $d$-polytope 
with $n> N(d,i)$ $i$-faces if and only if $n$ is a multiple of~$G(d,i)$.
Additionally, $G(d,i)=1$ for $i\ge \big\lfloor\tfrac{d+1}{2}\big\rfloor$.
As a consequence the $1$-dimensional coordinate projection of the set of $f$-vectors 
of all simple $d$-polytopes is a semi-algebraic
set of integer points over $\Z$.
The same holds for simplicial polytopes, by duality.
Here, the number $G(d,i)$ is equal to $1$ for $i\le \big\lfloor\tfrac{d+1}{2}\big\rfloor -1$.
This also implies that the $1$-dimensional projection sets $\PiFset{d}{i}{}$ are semi-algebraic sets of integer points
for all choices of $d$ and $i$ with the possible exception of odd $d$ and 
$i=\tfrac{d-1}{2}$.
In order to show that $\PiFset{2i+1}{i}{}$ is a semi-algebraic set of integer points, 
we derive from \cite{BjornerLinusson}:
\[
	G(2i+1,i) =  \begin{cases*}
                    p & if $i+2=p^s$ for some integer $s\ge 1$ and some prime $p$, \\
                    1 & otherwise.
                 \end{cases*} 
\]
Hence, $\PiFset{2i+1}{i}{}$ is a semi-algebraic set of integer points
if $i+2\neq p^s$ for all primes $p$ and all integers $s$. 

Let now $i+2=p^s$ for some $s\ge 1$ and a prime~$p$.
Assume that we have a $(2i+1)$-polytope $P$ with a simplex facet
such that $\gcd(f_i(P),p)=1$.
Then using the construction of connected sums 
by Eckhoff \cite{Eckhoff06} (see also \cite[p.~274]{Z35})
to successively add copies of $P$, its dual $P^*$, simple and simplicial 
polytopes, we obtain $(2i+1)$-polytopes with all possible numbers $n$ of $i$-faces
for all sufficiently large $n$, that is, for $n\ge M(d,i)$. 

To complete our proof, we give a construction of the polytope $P$.
We consider two different cases.
In the first case, let $i+2=2^s$ for some $s\ge 2$.

Since $G(2i,i)=1$ we can find a simple $2i$-polytope $R$ with an odd number of $i$-faces.
From  \cite{BjornerLinusson} we get that
\[
     G(2i,i-1) = \begin{cases*}
                    2 & if $i+2=2^t$ for some integer $t$, \\
                     1 & otherwise.
                 \end{cases*}
\]
Thus $R$ has an even number of $(i-1)$-faces.
Let $Q$ be the connected sum $R\#R^*$ of $R$
and its dual. Then $f_i(Q)=f_i(R)+f_i(R^*)=f_i(R)+f_{i-1}(R)$
is odd and $Q$ has a simplex facet.

Let now $P$ be the bipyramid over $Q$.
Then $f_i(P)= 2f_{i-1}(Q) +f_i(Q)$ is odd and $P$ has a simplex facet.

In the second case, $i+2=p^s$ for some integer $s\ge 1$ and some odd prime $p$.
Choose a simple $2i$-polytope $R$ with $f_0(R)\ge i+1$ and $\gcd(f_i(R),p)=1$.
Such a polytope $R$ exists since $G(2i,i) = 1$.
 Let $P_1$ be the prism over $R$ and $P_2$ the pyramid over $R^*$. 
 Then $f_i(P_1)=f_{i-1}(R)+2f_i(R)$,
$f_i(P_2)=f_{i-1}(R)+f_i(R)$,  $P_1$ is a simple polytope and $P_2$ has $f_0(R)\ge i+1$ simplex facets.
Let $P$ be the connected sum of $P_2$ and $i+1$ copies of $P_1$:
\[
	P=({\cdot}{\cdot}((P_2\# \underbrace{P_1)\#P_1)\#\dots P_1)\#P_1}_{i+1}. 
\]
\enlargethispage{4mm}%
The resulting $(2i+1)$-polytope $P$ has a simplex facet and
\begin{align*}
	f_i(P)&=f_i(P_2)+(i+1)f_i(P_1)\\
	        &=f_{i-1}(R)+f_i(R)+(i+1)(f_{i-1}(R)+2f_i(R))\\
	        &=(i+2)(f_{i-1}(R)+2f_i(R))-f_i(R)\\
	        &=p^s(f_{i-1}(R)+2f_i(R))-f_i(R),
\end{align*}
which is coprime to $p$, since $f_i(R)$ is coprime to $p$. 
\smallskip

\noindent%
(\ref{thm:part-07 2dim 4-polytopes})
The $2$-dimensional coordinate projections $\PiFset{4}{ij}{}$  
have been characterized by Gr\"unbaum \cite[Thm.~10.4.1, 10.4.2]{Gr1-2},
Barnette \cite{barnette74:_e_s}, and Barnette \& Reay \cite{barnette73:_projec}:
$\PiFset{4}{03}{}$ consists of all the integer points between two parabolas,
$\PiFset{4}{01}{}$ is the set of all integer points between a line and a parabola,
			with four exceptions, and
$\PiFset{4}{02}{}$ is the set of all integer points between two parabolas,
		except for the integer points on an exceptional parabola,
		and ten more exceptional points.
\smallskip

\noindent%
(\ref{thm:part-08 01 5-polytopes})
The set $\PiFset{5}{01}{}$ was recently determined independently 
by Kusunoki \& Murai~\cite{KusunokiMurai} and by
Pineda-\/Villavicencio, Ugon \& Yost~\cite{PinedaUgonYost}: 
		It is the set of all integer points between a line and a parabola,
		except for the integer points on two lines and three more exceptional points.
\smallskip

\noindent	
(\ref{thm:part-09 f0 cubical d-polytopes})
The possible vertex numbers of cubical $3$-polytopes are
$\PiFset{3}{0}{cub}=\{8\}\cup\{n\in\Z:n\ge10\}$.
Blind \& Blind \cite{blind94:_gaps_i} proved that the number
of vertices $f_0$ as well as of edges $f_1$ are even for every cubical $d$-polytope
if $d\ge4$ is even.
According to Blind \& Blind \cite[Cor.~1]{BlBl2},
there are ``elementary'' cubical $d$-polytopes $C^d_k$ with $2^{d+1}-2^{d-k}$ vertices,
for $0\le k<d$. (In particular, $C^d_{d-1}$ has $2^{d+1}-2$ vertices.)
As the facets of these polytopes are projectively equivalent to standard cubes,
we can glue them in facets (as in Ziegler \cite[Sect.~5.2]{Z165}),
and thus obtain all sufficiently large even vertex numbers.
Thus $\PiFset{d}{0}{cub}$ is a semi-algebraic subset of the lattice $2\Z$
for even $d\ge4$.
\smallskip

\noindent	
(\ref{thm:part-10})
Paffenholz \& Werner \cite{PaffenholzWerner:many} and Miyata \cite{Miyata-diss} 
proved that the set of possible numbers of vertices for $2$-simplicial $2$-simple $4$-polytopes
is $\PiFset{4}{0}{2s2s}=\{5\}\cup\{n\in\Z:n\ge9\}$.
\smallskip

\noindent	
(\ref{thm:part-11})
For even $d$ and $n+m\ge\binom{3d+1}{\lfloor d/2\rfloor}$
there exists a $d$-polytope $P$ with $n$ vertices and $m$ facets
if and only if 
$m\le f_{d-1}(C_d(n))$
and
$n\le f_{d-1}(C_d(m)),$
where $C_d(n)$ denotes the $d$-dimensional cyclic polytope with $n$ vertices \cite{Z160}.
\end{proof}

\section{Proof Techniques}\label{sec:techniques}

It is easy to see that a subset $A\subseteq\Z$
is a semi-algebraic set of integer points if and only if
it consists of a finite set of (possibly unbounded)
intervals of integer points.
Equivalently, a subset $A\subseteq\Z$ is \emph{not}
a semi-algebraic set of integer points
if and only if there is a strictly monotone (increasing or decreasing)
infinite sequence of integers, with $a_1<a_2<\cdots$ or $a_1>a_2>\cdots$,
such that $a_{2i}\in S$ and $a_{2i+1}\in\Z{\setminus}S$.

The same characterization holds for semi-algebraic sets of lattice points
$A\subset\R$, where $\aff_\Z A$ takes over the role of the integers~$\Z$.

Examples of subsets of $\Z$ that are {not} $\R$-semi-algebraic sets of 
lattice points include 
the set of squares $\{ n^2: n\in \Z_{\ge0}\}$, 
the set $\{n \in \Z : n \not\equiv 0 \mod 3\}$,
and the set $\{1,2,4,6,8,10,\ldots\}$.

For subsets of $\Z^2$, or of $\Z^d$ for $d>2$, we do not
have -- or expect -- a complete characterization of semi-algebraic
sets of integer points. 

There are some obvious criteria: For example, every finite set 
of integer points is semi-algebraic, finite unions of semi-algebraic
sets of lattice points with respect to the same lattice are semi-algebraic, products of semi-algebraic
sets of lattice points are semi-algebraic, and so on.

However, these simple general criteria turn out to be of little use 
for studying the specific sets of integer points we are interested in.
The ``finite oscillation'' criterion of the one-dimensional case
suggests the following approach for subsets $A\subset\Z^d$:

\begin{lemma}[Curve lemma]\label{lemma:curve}
If there is a semi-algebraic curve $\Gamma$ that 
along the curve contains an infinite sequence of integer 
points $a_1,a_2,\dots$ (in this order along the curve) with 
$a_{2i}\in\Gamma\cap A$ and $a_{2i+1}\in\Gamma{\setminus}A$,
then $A$ is \emph{not} a semi-algebraic set of integer points.
Similarly, if this holds with $a_1,a_2,\ldots\in\Lambda:=\aff_\Z A$,
then $A$ is \emph{not} a semi-algebraic set of lattice points.
\end{lemma}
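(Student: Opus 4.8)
The plan is to reduce the multi-dimensional statement to the one-dimensional ``finite oscillation'' criterion already established at the beginning of this section, by pulling the set $A$ back along the curve $\Gamma$ to a subset of a line. First I would invoke the structure theory of semi-algebraic curves: a one-dimensional semi-algebraic set $\Gamma\subseteq\R^d$ admits a semi-algebraic parametrization, i.e.\ there is a finite partition of $\Gamma$ into points and open arcs, and each open arc is the image of an open interval $I\subseteq\R$ under a continuous semi-algebraic injection $\gamma\colon I\to\R^d$ (this is standard; see Basu, Pollack \& Roy \cite{basu03:_algor_real_algeb_geomet}). Since the sequence $a_1,a_2,\dots$ is infinite and the partition is finite, infinitely many of the $a_k$ lie on a single such arc, and — after passing to this subsequence, which preserves the alternation pattern $a_{2i}\in A$, $a_{2i+1}\notin A$ up to a harmless index shift — we may assume $\Gamma=\gamma(I)$ is a single arc and all the $a_k$ lie on it.

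Next, suppose for contradiction that $A=S\cap\Z^d$ for a semi-algebraic set $S\subseteq\R^d$. Consider $T:=\gamma^{-1}(S)\subseteq I\subseteq\R$. Because $\gamma$ is semi-algebraic, $T$ is a semi-algebraic subset of $\R$, hence a finite union of points and intervals; in particular $T$ consists of finitely many (possibly unbounded) intervals. Let $t_k:=\gamma^{-1}(a_k)\in I$; since $\gamma$ is injective and the $a_k$ occur in order along the curve, the $t_k$ form a strictly monotone sequence in $\R$ (increasing or decreasing according to the orientation). By construction $t_{2i}\in T$ while $t_{2i+1}\notin T$, so $T$ separates infinitely many consecutive members of a monotone sequence — contradicting the fact that a finite union of intervals can change membership only finitely often along a monotone sequence. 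This contradiction shows $A$ is not a semi-algebraic set of integer points.

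For the lattice version the argument is identical: assume $A=S\cap\Lambda$ with $\Lambda=\aff_\Z A$, and note that the points $a_k$ are assumed to lie in $\Lambda$, so the same pull-back along $\gamma$ produces the same contradiction, now with $\Lambda$ playing the role of $\Z^d$. The one subtlety worth spelling out is that $\gamma^{-1}(S)$ really is semi-algebraic: this follows because the graph of $\gamma$ is semi-algebraic and $T$ is the projection to the $I$-coordinate of the intersection of that graph with $S\times\R^d$ (or one simply observes that composing the defining polynomials of $S$ with a semi-algebraic parametrization and quantifier-eliminating keeps everything semi-algebraic).

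The main obstacle — really the only place care is needed — is the first step: justifying that we may assume $\Gamma$ is a single injectively parametrized arc containing all the sample points in order. A semi-algebraic curve can have self-intersections, cusps, and finitely many singular points, and ``in this order along the curve'' must be interpreted via such a parametrization rather than via the ambient geometry. Once the cell decomposition of $\Gamma$ is in hand and one notes that an infinite sequence must accumulate its weight on a single arc, the rest is the routine one-dimensional argument. I would therefore devote most of the written proof to setting up the parametrization cleanly and to the bookkeeping of passing to the subsequence while preserving the in/out alternation.
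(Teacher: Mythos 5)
The paper offers no proof of this lemma: it is stated as an immediate consequence of the one-dimensional ``finite oscillation'' criterion recalled at the start of Section~3, and it is never invoked afterwards (the Strip Lemma replaces it precisely because the required curves $\Gamma$ do not exist for the sets of interest). Your reduction to the one-dimensional case --- parametrize an arc of $\Gamma$, pull $S$ back along the parametrization to a semi-algebraic subset $T$ of an interval via the graph-plus-projection (Tarski--Seidenberg) argument, and observe that a finite union of points and intervals cannot separate infinitely many consecutive terms of a monotone sequence --- is correct and is surely the intended argument. The one point to tighten is the passage to a single arc: an arbitrary infinite subsequence of $a_1,a_2,\dots$ does \emph{not} preserve the alternation $a_{2i}\in A$, $a_{2i+1}\notin A$, so ``infinitely many $a_k$ lie on one arc'' is not by itself sufficient. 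What saves the step is that the finitely many cells of the decomposition are order-convex pieces of $\Gamma$, so a sequence that is monotone in the order along the curve meets each cell in a block of consecutive indices; the unique infinite block is therefore a \emph{tail} of the original sequence, and a tail does preserve the alternation up to parity. You correctly identify this as the delicate step, but the written proof should say explicitly that the surviving subsequence is a tail rather than an arbitrary subsequence. With that made precise, both the integer-point and the lattice-point versions follow exactly as you describe.
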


However, for our examples the semi-algebraic curves $\Gamma$ of
Lemma~\ref{lemma:curve} do not exist.
Thus to show that a $2$-dimensional set is not a semi-algebraic set of lattice points 
we develop a better criterion:
Instead of the ``curve lemma'' we rely on a ``strip lemma,''
which in place of single algebraic curves considers strips generated by
disjoint translates of an algebraic curve.

In the following, 
we refer to Basu, Pollack and Roy \cite{basu03:_algor_real_algeb_geomet} 
for notation and information about semi-algebraic sets.

\begin{definition}\label{def:strip}
Let $\gamma_0=\{(x,f(x)):x\ge0\}\subset \R^2$ be a curve,
where $f(x)$ is an algebraic function defined for all $x\ge 0$, 
and let $c$ be a vector in $\R^2$.
If the translates $\gamma_t=\gamma_0+tc$ for $t\in [0,1]$ are disjoint,
then we refer to this family of curves $\mathcal{C}\coloneqq \{\gamma_t\}_{t\in[0,1]}$ 
as a \emph{strip of algebraic curves}.
A \emph{substrip} of $\mathcal{C}$ is a family $\mathcal{C}_J$ of all curves $\gamma_t$ with $t\in J$,
where $J$ is any closed interval $J\subseteq [0,1]$ of positive length.
\end{definition}

\begin{lemma}[Strip lemma]\label{lemma:strip}
Let $L \subset\Z^2$ be a set of integer points and
$\Lambda =\aff_{\Z}L$ the affine lattice spanned by $L$.
If there exists a strip of algebraic curves $\mathcal{C}$ 
such that every substrip~$\mathcal{C}_J$ contains infinitely many
points from $\Lambda\cap L$ and infinitely many points from $\Lambda\setminus L$,
then $L$ is not an $\R$-semi-algebraic set of lattice points.
\end{lemma}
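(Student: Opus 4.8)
The plan is to argue by contradiction. Suppose $L=S\cap\Lambda$ for some semi-algebraic set $S\subseteq\R^2$; I want to produce one substrip $\mathcal{C}_J$ that violates the hypothesis. The geometric idea is that intersecting $S$ with a whole substrip $\{\gamma_t\}_{t\in J}$ is controlled by a single two-variable semi-algebraic set whose only ``unbounded direction'' is $x\to\infty$; along a carefully chosen closed sub-interval $J$ of positive length the curves $\gamma_t$ are either eventually (for all large $x$) all inside $S$ or eventually all outside $S$, and this confines the ``wrong'' lattice points to a compact region, of which there are only finitely many.

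First I would pull everything back to the parameter square. Let $\phi\colon\R_{\ge0}\times[0,1]\to\R^2$ be the map $\phi(x,t)=(x,f(x))+tc$; its graph is semi-algebraic because $f$ is an algebraic function, so $\widetilde S:=\phi^{-1}(S)$ is a semi-algebraic subset of $\R_{\ge0}\times[0,1]$. For each fixed $t$ the fibre $\widetilde S_t\subseteq\R_{\ge0}$ is a finite union of points and intervals, hence $\gamma_t$ is either eventually contained in $S$ or eventually disjoint from $S$; write $g(t)=1$ in the first case and $g(t)=0$ in the second. By Tarski--Seidenberg, $T_1:=\{t\in[0,1]:g(t)=1\}$ is semi-algebraic, and so is $T_0:=[0,1]\setminus T_1$. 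A semi-algebraic subset of $\R$ is a finite union of points and intervals and $[0,1]$ is uncountable, so one of $T_0,T_1$ contains an open interval $(\alpha,\beta)$. The two cases are symmetric, exchanging the roles of $\Lambda\cap L$ and $\Lambda\setminus L$; consider first the case $T_1\supseteq(\alpha,\beta)$.

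Next I would extract a closed sub-interval on which the passage into $S$ is uniform. On $(\alpha,\beta)$ the function $h(t):=\sup\{x\ge0:(x,t)\notin\widetilde S\}$ (set $h(t)=0$ if the set is empty) is finite-valued (because $g\equiv1$ there) and has semi-algebraic graph, and a semi-algebraic function on an interval is continuous off a finite set; so I pick an open sub-interval of $(\alpha,\beta)$ on which $h$ is continuous and then a closed interval $J=[a,b]$ of positive length inside it, so that $h\le M$ on $J$ for some constant $M$. Then for every $t\in J$ and every $x>M$ we have $(x,f(x))+tc\in S$. Consequently any $p\in\Lambda$ lying on the substrip $\mathcal{C}_J$ but outside $S$ must equal $(x,f(x))+tc$ with $t\in J$ and $0\le x\le M$; all such points lie in the compact set $\{(x,f(x))+tc:0\le x\le M,\ t\in J\}$ (a continuous image of a compact set, using continuity of $f$ on $[0,M]$), which, since $\Lambda$ is discrete, contains only finitely many points of $\Lambda$. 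Hence $\mathcal{C}_J$ contains only finitely many points of $\Lambda\setminus L$, contradicting the hypothesis. In the remaining case $T_0\supseteq(\alpha,\beta)$, the identical computation with $h(t)=\sup\{x\ge0:(x,t)\in\widetilde S\}$ shows that $\mathcal{C}_J$ contains only finitely many points of $\Lambda\cap L$, again contradicting the hypothesis.

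The step I expect to be the main obstacle is precisely the choice of $J$: one is tempted to think that ``every fibre $\widetilde S_t$ is bounded'' forces $\widetilde S$ to be bounded over all of $(\alpha,\beta)$, but this is false, since $h(t)$ can blow up as $t$ approaches finitely many exceptional values. The honest fix is to use semi-algebraicity of $h$, so that it has only finitely many discontinuities and is locally bounded away from them; this is why the argument genuinely passes through $\phi^{-1}(S)$ and Tarski--Seidenberg rather than a naive compactness statement. A secondary point to handle with care is the meaning of ``algebraic function $f$ defined for all $x\ge0$''; I will use that it is continuous, so that its graph over a compact interval is compact and the relevant image sets $\{(x,f(x))+tc:0\le x\le M,\ t\in J\}$ are indeed compact.
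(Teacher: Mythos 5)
Your argument is correct, and it reaches the conclusion by a genuinely different route than the paper. The paper works directly in the plane with the boundary $\bd(S)=\overline{S}\cap\overline{\R^2\setminus S}$: it decomposes $\bd(S)$ into finitely many semi-algebraic connected components, argues that each unbounded branch inside the strip must eventually lie on or be asymptotic to a single curve $\gamma_t$, and then chooses a closed parameter interval $[\delta_0,\delta_1]$ avoiding the finitely many exceptional values $t_1,\dots,t_k$, so that far enough out the corresponding substrip is entirely inside or entirely outside $S$. You instead pull $S$ back to the parameter domain via $\phi(x,t)=(x,f(x))+tc$, and do all the work with one-variable semi-algebraic structure: fibrewise finiteness gives the eventual in/out dichotomy $g(t)$, Tarski--Seidenberg makes $T_0,T_1$ semi-algebraic so one contains an interval, and generic continuity of the semi-algebraic function $h$ yields a uniform bound $M$ on a closed subinterval $J$, confining the exceptional lattice points to a compact set. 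The two proofs buy different things: the paper's is more geometric and visual (it matches Figure~\ref{plot:strip_lemma}), but its key step --- that each unbounded boundary branch is asymptotic to \emph{some single} $\gamma_t$, obtained ``by continued bisection'' --- is stated rather than proved; your parametrized version replaces that step by the standard facts that a semi-algebraic subset of $\R$ is a finite union of points and intervals and that a semi-algebraic function is continuous off a finite set, which makes the uniformity over the substrip fully explicit. You also correctly identify the one genuine trap (boundedness of each fibre does not give boundedness uniformly in $t$) and handle it with the semi-algebraicity of $h$; the minor points you flag (semi-algebraicity of $\phi^{-1}(S)$ via the graph of the algebraic function $f$, and continuity of $f$ for compactness of $\phi([0,M]\times J)$) are legitimate under the paper's Definition~\ref{def:strip}.
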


See Figure~\ref{plot:strip_lemma} for a visualization.

\begin{figure}[ht!]
  \centering
  \includegraphics[scale=.33]{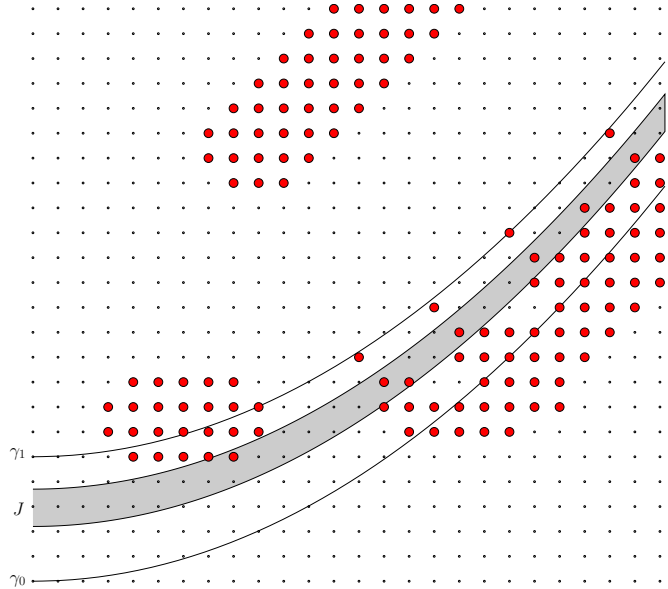}
\caption{%
This sketch illustrates that for
a semi-algebraic set $L$ of lattice points there cannot 
be an infinite sequence of lattice points in $L$, as well as not in~$L$,
in every substrip between $\gamma_0$ and~$\gamma_1$.}
\label{plot:strip_lemma}
\end{figure}

\begin{proof}

Assume that $L$ is $\R$-semi-algebraic,
that is, there exists an $\R$-semi-algebraic set $S\subset \R^2$ such that 
$L=S\cap\Lambda$.
The boundary of $S$ is the intersection of the closure of $S$ 
with the closure of $\R^2 \backslash S$, 
$\boundary (S)=\overline{S}\cap \overline{\R^2 \backslash S}$.
The Tarski--Seidenberg theorem yields that the closure of a semi-algebraic set in $\R^d$ is again a semi-algebraic set \cite[Prop.~3.1]{basu03:_algor_real_algeb_geomet}. 
The boundary $\boundary(S)$ is the intersection of two semi-algebraic sets and hence itself a semi-algebraic set.

Any semi-algebraic set consists of finitely many connected components, 
all being semi-algebraic \cite[Thm.~5.19]{basu03:_algor_real_algeb_geomet}.

From this we want to derive that
for any strip of algebraic curves $\mathcal{C}$ 
there exists a substrip $\mathcal{C}_J$ of $\mathcal{C}$
such that for some $n\ge 0$, all lattice points $(a,b)\in \Lambda$ with $a\ge n$ in the substrip
belong entirely to $L$, or all of them do not belong to $L$.

Denote by $\beta_1, \ldots, \beta_m$ all those connected components of $\boundary(S)$ that contain points $(x,y)\in \R^2$
with arbitrarily large $x$ in a strip $\mathcal{C}$.
If such components do not exist, then either all points of $\mathcal{C}\cap \Lambda$ with sufficiently large $x$-coordinate
(that is, all but finitely many of these points) lie in $L$,
or all of them do not lie in~$L$.

The intersection of a semi-algebraic 
component $\beta_j$ and any semi-algebraic curve $\gamma_t$
is again semi-algebraic, so it consists of finitely many connected components.
Thus for any given $\beta_j$ and $\gamma_t$, 
$\beta_j$ has finitely many 
branches to infinity such that each branch
eventually (for all sufficiently large $x$-coordinates)
stays above $\gamma_t$, or below $\gamma_t$, or on $\gamma_t$.
Thus by continued bisection we find that 
there exists some value $n'\ge 0$ such that the restriction of each $\beta_j$ 
to $x\ge n'$ has finitely many components, each of which either lies on a curve $\gamma_t$, or it is a curvilinear asymptote to \emph{some} curve $\gamma_t$.
Let the components of $\{(x,y)\in\beta_j:x\ge n'\}$ be asymptotic to 
(or lie on)
$\gamma_{t_1},\dots, \gamma_{t_k}$, with $0\le t_1\le\dots\le t_k\le 1$,
and let $[\delta_0,\delta_1]\subset [0,1]$ be an interval interval of positive length
(that is, with $\delta_0<\delta_1$) that is disjoint from $\{0,t_1,\dots, t_k,1\}$.
Then there exists an $n\ge 0$ such that
the lattice points $(a,b)\in \Lambda$ with $a\ge n$ contained in the
substrip obtained from $[\delta_0,\delta_1]$ 
either all belong to $L$ or they all do not belong to~$L$.
\end{proof} 

\section{Edge and ridge numbers of 4-polytopes}\label{sec:dim4}
In this section we prove Theorem~\ref{thm:Pi4_12}.

\begin{theorem}[Barnette {\cite[Thm. 1]{barnette74:_e_s}}, see also \cite{Z59}, with corrections] \label{f1f2} 
Let $f_1$ and $f_2$ positive integers with $f_1 \ge f_2$. 
Then there is a $4$-polytope $P$ with $f_1(P) = f_1$
and $f_2(P) = f_2$ if and only if 

\begin{eqnarray*}
f_2 &\ge& \tfrac1 2 f_1 +\big\lceil\sqrt{f_1+\tfrac 9 4}+\tfrac 1 2\big\rceil+1,\\
f_2 &\neq& \tfrac1 2 f_1 +\sqrt{f_1+\tfrac {13} 4}+2,
\end{eqnarray*}
and $(f_1,f_2)$ is not one of the $13$ pairs
\begin{align*}
&(12,12),(14,13),(14,14),(15,15),(16,15),(17,16),(18,16),\\
&(18,18),(20,17),(21,19),(23,20),(24,20),(26,21).
\end{align*}
\end{theorem}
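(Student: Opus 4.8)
The plan is to prove both directions—necessity of the stated conditions and realizability of every admissible pair—organized around duality. Since $f_1(P)=f_2(P^*)$ and $f_2(P)=f_1(P^*)$ for the dual polytope $P^*$, the regime $f_1\ge f_2$ under consideration is exactly that of the \emph{simple} and near-simple $4$-polytopes $P$, dual to the simplicial and near-simplicial ones whose $f$-vectors are governed by the $g$-theorem and the Upper Bound Theorem. Throughout I would carry the two auxiliary face numbers $f_0,f_3$ and eliminate them using Euler's relation $f_0-f_1+f_2-f_3=0$, writing $n_{\min}:=\lceil\sqrt{f_1+\tfrac94}+\tfrac12\rceil+1$ for the quantity in the bound.

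For \emph{necessity} of the lower bound I would combine two elementary inequalities. First, every vertex of a $4$-polytope has degree at least $4$ (its vertex figure is a $3$-polytope), so $2f_1\ge 4f_0$, i.e.\ $f_0\le\tfrac12 f_1$; substituting into $f_2=f_1+f_3-f_0$ yields the clean bound $f_2\ge\tfrac12 f_1+f_3$, with equality precisely for simple polytopes. Second, the dual Upper Bound Theorem bounds the edges by those of the dual cyclic polytope, $f_1(P)=f_2(P^*)\le f_2(C_4(f_3))=f_3(f_3-3)$. Solving $f_3(f_3-3)\ge f_1$ for the integer $f_3$ gives $f_3\ge\lceil\tfrac12(3+\sqrt{9+4f_1})\rceil=n_{\min}$, since $\tfrac12(3+\sqrt{9+4f_1})=\tfrac32+\sqrt{f_1+\tfrac94}$ and the ceiling of this equals $\lceil\sqrt{f_1+\tfrac94}+\tfrac12\rceil+1$. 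Plugging $f_3\ge n_{\min}$ into $f_2\ge\tfrac12 f_1+f_3$ reproduces exactly the stated inequality.

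The \emph{forbidden parabola} is the subtle part of necessity. I expect it to collect precisely the arguments $f_1=m(m-3)-1$, at which the smallest integer value permitted by the inequality is $\binom{m}{2}$: a short computation forces any polytope meeting this value to be a simple $4$-polytope with exactly $m$ facets, hence dual to a \emph{simplicial} $4$-polytope on $m$ vertices with $2f_1^*-2m=m(m-3)-1$ edges—an odd right-hand side, so $f_1^*$ is non-integral and no such polytope exists. Ruling out the nearby near-simple candidates (those with total excess vertex-degree equal to $2$) and verifying that the $13$ sporadic pairs are genuinely non-realizable is then a finite but delicate structural analysis of almost-simple, almost-neighborly $4$-polytopes.

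For \emph{sufficiency} I would construct polytopes covering every remaining lattice point of the admissible region $\lceil\tfrac12 f_1\rceil+n_{\min}\le f_2\le f_1$. The lower boundary is realized by duals of (near-)simplicial $4$-polytopes with $f_3=n_{\min}$ vertices, available across the required edge counts by the $g$-theorem, while the diagonal $f_2=f_1$ is reached by self-dual-type examples starting from the simplex $\Delta_4$ with $f=(5,10,10,5)$. To sweep out the interior I would use a fixed repertoire of local modifications—vertex truncations, stackings on simplex facets, and connected sums as in Eckhoff's construction—each changing $(f_1,f_2)$ by a fixed small vector, so that suitable compositions hit every target pair; the finitely many small cases would be settled by hand. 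I expect the genuine obstacle to be exactly this covering step: assembling constructions whose $(f_1,f_2)$-values tile the region with no gaps and without ever landing on the forbidden parabola or an exceptional pair—which is precisely the point at which the original statement required the corrections noted in the attribution.
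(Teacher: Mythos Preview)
The paper does not prove this theorem at all: it is quoted as a known result of Barnette (with the corrected exception list from \cite{Z59}) and used as input for the subsequent non-semi-algebraicity argument. So there is no ``paper's own proof'' to compare against; everything in Sections~\ref{sec:dim4} onward takes the characterization for granted.

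That said, your outline of a proof is in the right spirit and the necessity side is essentially correct. The chain
\[
f_2 \;=\; f_1 - f_0 + f_3 \;\ge\; \tfrac12 f_1 + f_3 \;\ge\; \tfrac12 f_1 + n_{\min}
\]
is exactly how Barnette obtains the lower bound, and your derivation of $n_{\min}$ from the dual Upper Bound inequality $f_1\le f_3(f_3-3)$ is clean. Your treatment of the forbidden parabola is also on target: at $f_1=m(m-3)-1$ the bound forces $f_3\ge m$, and equality in $f_2=\tfrac12 f_1+f_3$ would make the dual simplicial and neighborly on $m$ vertices, whence Dehn--Sommerville gives $f_2^*=m(m-3)\neq m(m-3)-1$; the remaining near-simple case (excess degree $2$) is the part you correctly flag as needing separate work.

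Where your sketch is genuinely thin is sufficiency. Saying the lower boundary is ``realized by duals of (near-)simplicial $4$-polytopes with $f_3=n_{\min}$ vertices, available \ldots\ by the $g$-theorem'' hides the real difficulty: for odd $f_1$ no simplicial dual can have $f_2^*=f_1$, so one must produce non-simplicial polytopes hitting exactly the right $(f_1,f_2)$, and the local moves you list (truncation, stacking, connected sum) do not obviously tile the region without gaps---Barnette's paper spends most of its length on precisely this, and the fact that the exception list needed later correction shows how delicate the covering argument is. So your proposal is a plausible roadmap to Barnette's theorem, but the constructive half is a promissory note rather than a proof, and in any case it goes well beyond what the present paper attempts.
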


The case when $f_1(P) \le f_2(P)$ is given by duality.
See Figure~\ref{plot:f1f2}.

\begin{figure}[ht!]
  \centering
  \includegraphics[scale=.4]{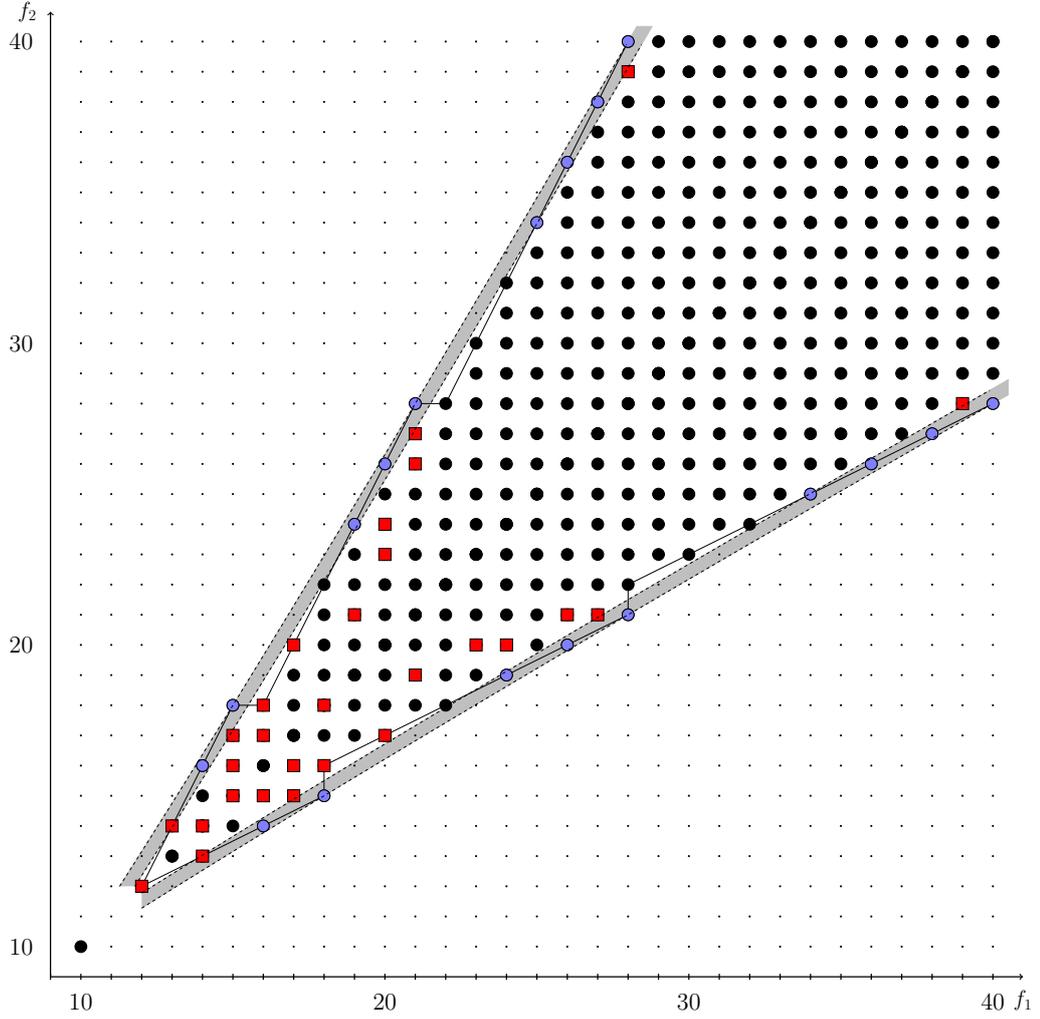}
\caption{The set $\Pi^4_{12}$ with the two strips that will play a crucial role in the proof
that the set is not semialgebraic, see Lemma~\ref{gammacurves} and its proof.}
\label{plot:f1f2}
\end{figure}

Now we show that there is no semi-algebraic description of the set
of pairs $(f_1,f_2)$ by proving that the set 
\begin{equation}
A \ \coloneqq\  
\big\{(x,y) \in \Z^2 : 
 x\ge 0, y\ge \tfrac x 2 +
\big\lceil\sqrt{x+\tfrac 9 4}+\tfrac 12\big\rceil
 +  1 \big\}\label{def:setA}
\end{equation}
is not a semi-algebraic set of lattice points. See Figure~\ref{areaA}.

\begin{figure}[htb]
  \centering
  \includegraphics[width=13cm]{plot_setA}
\caption{The set 
$ A=
\big\{(x,y) \in \Z^2 : 
 x\ge 0, y\ge \tfrac x 2 +
\big\lceil\sqrt{x+\tfrac 9 4}+\tfrac 1 2\big\rceil + 1 \big\} $}
\label{areaA}
\end{figure}

The proof strategy is the following:
In Lemma~\ref{lemma_curves} we give an alternative description of the set.
In Lemma~\ref{gammacurves} we observe that our set has the property described in 
Lemma~\ref{lemma:strip}, which implies that the set is not an $\R$-semi-algebraic set of lattice points.

\begin{lemma}\label{lemma_curves}
Let $x$ and $y$ be nonnegative integers. 
Then  
\begin{equation}
y\ge \tfrac x 2 +\big\lceil\sqrt{x+\tfrac 9 4}+\tfrac 1 2\big\rceil+1\label{cond1}
\end{equation}
if and only if  
\begin{eqnarray}
      y & \ge & \tfrac x 2 +\sqrt{x+\tfrac 9 4}+ 2  \notag\\ 
	  \text{or}\hspace{35mm} && 	            \notag\\[-14mm] 
      y &  =  & \tfrac x 2 +\sqrt{x+\tfrac 9 4}+\tfrac 3 2+r 
	  \hspace{55mm}\left.\rule[-14mm]{0pt}{12mm}\right\}\hspace{5mm}  \label{cond2}\\[-14mm] 
	    &     & \text{for some }r=i+\tfrac 1 2-\sqrt{(i+\tfrac 1 2)^2-2j} \notag\\ 
        &     & \text{with }i,j \in \Z,\ i\ge 1,\ 0\le j \le i. \notag
\end{eqnarray}  
\end{lemma}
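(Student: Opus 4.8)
The proof will establish the equivalence of \eqref{cond1} and \eqref{cond2} by a careful analysis of the ceiling function $\big\lceil\sqrt{x+\tfrac94}+\tfrac12\big\rceil$. Write $g(x):=\sqrt{x+\tfrac94}+\tfrac12$, so \eqref{cond1} reads $y\ge\tfrac x2+\lceil g(x)\rceil+1$. Since $y$ and $\tfrac x2$ differ from an integer only according to the parity of $x$, and since $y$ ranges over integers, condition \eqref{cond1} is equivalent to saying that $y-\tfrac x2-1$ is at least the smallest value $\ge g(x)$ of the appropriate parity class, i.e.\ that $y\ge\tfrac x2+1+\lceil g(x)\rceil$. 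The first alternative of \eqref{cond2}, $y\ge\tfrac x2+\sqrt{x+\tfrac94}+2=\tfrac x2+g(x)+\tfrac32$, is precisely the statement that $y-\tfrac x2-1\ge g(x)+\tfrac12$; so the two conditions agree whenever $y$ lies at least one unit above the threshold. The whole content of the lemma is therefore the \emph{boundary case}: identifying exactly which pairs $(x,y)$ satisfy \eqref{cond1} with $y=\tfrac x2+\lceil g(x)\rceil+1$, i.e.\ $y-\tfrac x2-1=\lceil g(x)\rceil$, and showing these are exactly the pairs described by the second alternative of \eqref{cond2}.

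**The key computation.** Set $r:=\lceil g(x)\rceil-g(x)\in[0,1)$, the ``rounding gap.'' Then the boundary pairs are exactly those with $y=\tfrac x2+g(x)+\tfrac32+r$, which matches the form $y=\tfrac x2+\sqrt{x+\tfrac94}+\tfrac32+r$ in \eqref{cond2}. It remains to show that the set of values of $r$ arising this way — as $x$ ranges over nonnegative integers for which a boundary pair of the correct parity exists — is exactly $\{\,i+\tfrac12-\sqrt{(i+\tfrac12)^2-2j}\;:\;i,j\in\Z,\ i\ge1,\ 0\le j\le i\,\}$. The plan is to parametrize: if $\lceil g(x)\rceil=m$, then $m-1<g(x)\le m$, i.e.\ $m-\tfrac32<\sqrt{x+\tfrac94}\le m-\tfrac12$, which after squaring gives a range of admissible $x$; writing $x$ in terms of $m$ and a residual index, one solves $r=m-g(x)=m-\tfrac12-\sqrt{x+\tfrac94}$ for $x$ and reads off $x+\tfrac94=(m-\tfrac12-r)^2$. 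The substitution $i=m-1$ and a suitable $j$ counting how far $x$ is into the block where $\lceil g(x)\rceil=m$ should convert this into $r=i+\tfrac12-\sqrt{(i+\tfrac12)^2-2j}$, and the constraints $i\ge1$, $0\le j\le i$ will fall out of the requirements that $x\ge0$ and that $x$ stays within the block (together with a parity check ensuring $y-\tfrac x2$ is an integer). One must also verify that $r\in[0,1)$ is consistent with $0\le j\le i$, and conversely that every such $(i,j)$ yields a genuine nonnegative integer $x$ of the right parity.

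**The main obstacle.** The delicate point is the interplay between the ceiling and the parity of $x$: $\tfrac x2$ is a half-integer when $x$ is odd, so the set of attainable values of $y-\tfrac x2-1$ is a shifted copy of $\Z$, and the ``smallest value $\ge g(x)$'' in that shifted lattice is not simply $\lceil g(x)\rceil$ in general. I expect one has to split into the cases $x$ even and $x$ odd and check that in each case the formula $y=\tfrac x2+\lceil g(x)\rceil+1$ does land in the correct lattice — or, alternatively, absorb the parity into the index $j$, which is likely why $j$ runs over a full interval $0\le j\le i$ of length $i+1$ rather than something of length $\sim i/2$. Getting this bookkeeping exactly right — matching endpoints, confirming no boundary pair is missed and none is double-counted, and confirming the stated ranges of $i$ and $j$ — is the real work; the algebra of solving for $x$ and squaring is routine once the parametrization is fixed. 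A clean way to organize it is to prove two inclusions separately: first that every $(x,y)$ satisfying \eqref{cond1} satisfies \eqref{cond2} (by setting $m=\lceil g(x)\rceil$ and exhibiting the witnesses $i,j$), and then the converse (given $i,j$ in range, define $x$ by the squaring identity, check it is a nonnegative integer with the right parity, and verify \eqref{cond1} holds with equality at the boundary, hence \emph{a fortiori} for all larger $y$).
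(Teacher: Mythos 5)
Your overall architecture is the right one, and for the direction \eqref{cond1}$\Rightarrow$\eqref{cond2} it matches what the paper does: with $g(x)=\sqrt{x+\tfrac94}+\tfrac12$, the only points satisfying \eqref{cond1} but not the first alternative of \eqref{cond2} sit at $y=\tfrac x2+\big\lceil g(x)\big\rceil+1$ with $x$ even (for odd $x$ this height is not a lattice point, and the nearest lattice point above it already clears the curve $y=\tfrac x2+\sqrt{x+\tfrac94}+2$), and the substitution $i=y-\tfrac x2-2$ with a suitable $j$ produces exactly the witnesses $r=i+\tfrac12-\sqrt{(i+\tfrac12)^2-2j}$, $i\ge1$, $0\le j\le i$, that you predict. (Minor slip: since your $g$ already contains the $+\tfrac12$, the boundary pair is $y=\tfrac x2+g(x)+1+r$, which is $\tfrac x2+\sqrt{x+\tfrac94}+\tfrac32+r$, not $\tfrac x2+g(x)+\tfrac32+r$.)

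The genuine gap is in the converse, and your own phrasing of it (``given $i,j$ in range, define $x$ by the squaring identity, check\ldots'') shows where it fails. The second alternative of \eqref{cond2} is a condition on an arbitrary pair $(x,y)$: it holds as soon as $(x,y)$ lies on \emph{some} curve $\gamma_r\colon y=\tfrac x2+\sqrt{x+\tfrac94}+\tfrac32+r$ with admissible $r$, not only at the single point of $\gamma_r$ whose $x$ you recover from $(i,j)$ by squaring; each such curve carries infinitely many other lattice points. The dangerous ones have odd $x$: whenever $x$ is odd and $\big\lceil g(x)\big\rceil-g(x)\ge\tfrac12$, the integer $y=\tfrac x2+\big\lceil g(x)\big\rceil+\tfrac12$ lies in the band with $r\in[0,\tfrac12)$ and \emph{violates} \eqref{cond1} (e.g.\ $(x,y)=(1,4)$, $r=2-\tfrac{\sqrt{13}}2$); if that $r$ were admissible, the equivalence would be false. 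Proving that the admissible $r$-values coincide \emph{as a set} with the rounding gaps of even $x$ does not exclude this, because nothing in that statement prevents a single admissible curve from also passing through an odd-$x$ lattice point. The missing ingredient, which is the number-theoretic heart of the paper's proof, is: if $x,y,i,j$ are integers with $y=\tfrac x2+\sqrt{x+\tfrac94}+\tfrac32+i+\tfrac12-\sqrt{(i+\tfrac12)^2-2j}$, then $\sqrt{4x+9}-\sqrt{4(i^2+i-2j)+1}$ is an integer, so either the two radicands are equal or both square roots are odd integers, and in either case $x$ is forced to be even. Without this step (or an equivalent, such as noting that $4x+9\equiv5\pmod 8$ is never a perfect square for odd $x$), the direction \eqref{cond2}$\Rightarrow$\eqref{cond1} remains unproven.
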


\begin{proof}
Let $x,y\ge 0$ be integers. 
We consider three separate cases:
\begin{flalign*}
\textbf{Case a}:& \quad y\ >\ \tfrac x 2 +\sqrt{x+\tfrac 9 4}+\tfrac 5 2,&\\
\textbf{Case b}:& \quad y\ =\ \tfrac x 2 +\sqrt{x+\tfrac 9 4}+\tfrac 3 2+r \text{ for some } r\in [0,1], and\\
\textbf{Case c}:& \quad y\ <\ \tfrac x 2 +\sqrt{x+\tfrac 9 4}+\tfrac 3 2.
\end{flalign*} 

\noindent
In \textbf{Case a} the first part of
condition \eqref{cond2} holds trivially. Since 
\[	y\ >\ \tfrac x 2 +\sqrt{x+\tfrac 9 4}+\tfrac 5 2
     \ >\ \tfrac x 2 +\big\lceil\sqrt{x+\tfrac 9 4}+\tfrac 1 2\big\rceil+ 1 ,
\]
condition \eqref{cond1} holds as well.
\smallskip

\noindent
In \textbf{Case c}
\[
	y\ <\   \tfrac x 2 +\sqrt{x+\tfrac 9 4}+\tfrac 3 2
	 \ \le\ \tfrac x 2 +\big\lceil\sqrt{x+\tfrac 9 4}+\tfrac 1 2\big\rceil+1,
\]
hence condition \eqref{cond1} is not satisfied.
On the other hand, observe that $r$ lies in the range from $0$ to~$1$ in the second part of condition \eqref{cond2}.
This shows us that condition \eqref{cond2} is not satisfied either.
\smallskip

\noindent
In \textbf{Case b} 
we prove the equivalence of condition \eqref{cond1} and \eqref{cond2} first for odd $x$, then for even~$x$.

Let $x$ be odd, $y = \tfrac x 2 +\sqrt{x+\tfrac 9 4}+\tfrac 3 2+r$ and $r\in[0,1]$.
Assume $x=2k+1$ for some $k \ge 0$.
We have
\begin{equation}
\sqrt{2k+\tfrac {13} 4}\ =\ y-k-r-2.    \label{r_equ1}
\end{equation}
Now
\begin{eqnarray*}
\tfrac x 2 +\big\lceil\sqrt{x+\tfrac 9 4}+\tfrac 1 2\big\rceil+1  & = & \\[2pt]
k + \big\lceil\sqrt{2k+\tfrac {13} 4}+\tfrac 1 2\big\rceil+\tfrac 3 2  
 &\overset{(\ref{r_equ1})}{=}&\\[-5pt]
k + \big\lceil y-k-r-\tfrac 3 2\big\rceil +\tfrac 3 2
 & = & \begin{cases}
    y+\tfrac 1 2& \text{if } r\in [0,\tfrac 1 2[,\\[1pt]
    y-\tfrac 1 2 & \text{if } r\in [\tfrac 1 2, 1].
\end{cases}
\end{eqnarray*}
This shows that condition \eqref{cond1} holds
if and only if $r\in [\tfrac 1 2, 1]$.

For $r\in [\tfrac 1 2,1]$ condition \eqref{cond2} is trivially satisfied. It remains to show that 
condition \eqref{cond2} does not hold for $r\in[0,\tfrac 1 2[$.
Assume by contradiction that condition \eqref{cond2} is satisfied for some  $x$ odd and $r\in[0,\tfrac 1 2[$.
The first part of condition \eqref{cond2} does not hold.
We will see that
\[
  r \ =\ i+\tfrac 1 2-\sqrt{(i+\tfrac 1 2)^2-2j}
\] 
with 
$i\ge 1$ and $0\le j \le i$
implies that $x$ is even.
Let 
\begin{eqnarray*}
y & = & \ \tfrac x 2+ 
   \sqrt{x+\tfrac {9}{4}}+\tfrac 3 2 +i+\tfrac 1 2-\sqrt{(i+\tfrac 1 2)^2-2j},\\
\noalign{\noindent then}
y-i-2 & = &\tfrac 1 2(x+\sqrt{4x+9}-\sqrt{4(i^2+i-2j)+1}). 
\end{eqnarray*}
So $\sqrt{4x+9}-\sqrt{4(i^2+i-2j)+1}$ is an integer of the same parity as $x$.

Either $4x+9=4(i^2+i-2j)+1$ or both $\sqrt{4x+9}$ and $\sqrt{4(i^2+i-2j)+1}$ are integers.
To see this, observe that
if $a,b\in \Z, c\in \Z_{\ne 0}$, then $\sqrt{a}-\sqrt{b}=c \Rightarrow \frac{a-b-c^2}{2c}=\sqrt{b}$.
This implies that $\sqrt{b}$ and hence $\sqrt{a}$ is a rational number, and since $a$ and $b$ are integers, 
$\sqrt{a}$ and $\sqrt{b}$ are integers as well.

In the first case, $x$ is even.  
In the second case, if $\sqrt{4x+9}$ and $\sqrt{4(i^2+i-2j)+1}$ are integers, then they are odd integers.
In both cases $x$ is an even integer, which contradicts the assumption.
Together, we obtain that conditions \eqref{cond1} and \eqref{cond2} are equivalent for odd $x$, for $r\in [0,1]$.

Let $x$ now be even, $x=2k$ for some $k \ge 0$, and
$y=\tfrac x 2+ \sqrt{x+\tfrac {9}{4}}+\tfrac 3 2+r$ for some $r\in[0,1].$ 
We have
\begin{eqnarray*}
y & = & \tfrac x 2+ \sqrt{x+\tfrac {9}{4}}+\tfrac 3 2+r\\
  & > & \tfrac x 2+ \big\lceil\sqrt{x+\tfrac {9}{4}}+\tfrac 1 2\big\rceil+r\\
  &\ge&\tfrac x 2+ \big\lceil\sqrt{x+\tfrac {9}{4}}+\tfrac 1 2\big\rceil.\\
\noalign{\noindent Since $x$ is even, this also shows that} 
y &\ge&\tfrac x 2+ \big\lceil\sqrt{x+\tfrac {9}{4}}+\tfrac 1 2\big\rceil +1,
\end{eqnarray*}
so condition \eqref{cond1} holds.
To see that condition \eqref{cond2} holds, we show if $x$ and $y$ 
are integers such that $x=2k$  for some $k \ge 0$ and
$y=\tfrac x 2+ \sqrt{x+\tfrac {9}{4}}+\tfrac 3 2+r$ for some $r\in[0,1]$,
then $r$ can be written as 
$r=i+\tfrac 1 2-\sqrt{(i+\tfrac 1 2)^2-2j}$ 
with integers $i\ge 1$, $0\le j \le i$.
For this, note that
\begin{eqnarray}
r & = & y-k-\tfrac 3 2 - \sqrt{2k+\tfrac {9}{4}}\label{r12}.\\
\noalign{\noindent Let $i\coloneqq y-k-2$ and
$j\coloneqq \tfrac{y(y-3)}{2}+\tfrac{k(k+1)}{2}-yk$. Then}
r & = & i+\tfrac 1 2-\sqrt{(i+\tfrac 1 2)^2-2j},\nonumber\\
y & = & \tfrac{i(i+3)}{2}-j+1, \label{def_y}\\
k & = & \ \tfrac{i(i+1)}{2}-j-1\label{def_k}.
\end{eqnarray}
Thus we have that $(y,k) \in \Z^2$ if and only if $(i,j) \in \Z^2$.
Observe that if $y,k\ge 0$  and $r\in [0,1]$, then by (\ref{r12}), $y\ge k+3$, so $i\ge1$.
It follows that $ r\in [0,1] $ if and only if $ 0\le j \le i$.
On the other hand, if $i\ge 1,j\ge 0$  and $r\in [0,1]$, then $j\le i$, so from (\ref{def_y}) it follows that $y\ge 0$ 
and from (\ref{def_k}) it follows that $k\ge 0$ if $i\ge 2$.
If $i=1$ and $j=0$, then $(y,k)=(3,0)$.
We exclude the special case $(i,j)=(1,1), r=1, (y,k)=(2,-1).$
This proves that $y$ and $k$ are non-negative integers with
\[
 r= y-k-\tfrac 3 2 - \sqrt{2k+\tfrac 9 4}\in [0,1]
\]
if and only if $i$ and $j$ are integers, $(i,j)\ne (1,1)$ with
\[
		i\ge 1,\ 
		0\le j\le i,\ 
		r=i+\tfrac 1 2-\sqrt{(i+\tfrac 1 2)^2-2j},
\]
so condition \eqref{cond2} is satisfied as well.
\end{proof}    

\begin{lemma}\label{gammacurves}
For $0\le r\le1$, let $\gamma_r$ be the algebraic curve $y=\tfrac x 2 +\sqrt{x+\tfrac {9} 4}+\tfrac 3 2+r$, 
restricted to $x\ge 0$.
To each curve $\gamma_{r_0}$ with $r_0\in [0,\tfrac 1 2]\cap\Q$,
there are two sequences of curves, $\gamma_{r_1(n)}$ and $\gamma_{r_2(n)},$ 
such that $|\gamma_{r_0}-\gamma_{r_1(n)}|$ and $|\gamma_{r_0}-\gamma_{r_2(n)}|$
converge to $0$.
Each $\gamma_{r_1(n)}$ contains an integer point $\big(x_1(n),y_1(n)\big)$  
from the set $A$ defined by \eqref{def:setA}
with $x_1(n)\rightarrow \infty$ as $n\rightarrow \infty$.
Each $\gamma_{r_2(n)}$ contains a point $\big(x_2(n),y_2(n)\big)$ 
from $\Z_{\ge 0}^2\backslash A$ 
with $x_2(n)\rightarrow \infty$ as $n\rightarrow \infty$.
\end{lemma}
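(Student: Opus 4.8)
The plan is to build the two sequences of curves explicitly and to read off membership in $A$ from Lemma~\ref{lemma_curves}; the only step that is not pure bookkeeping is a density statement about fractional parts of square roots.

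For the sequence $\gamma_{r_1(n)}$ that lands inside $A$, I would use the parametrization hidden in \eqref{cond2}. Writing $r_{i,j}:=i+\tfrac12-\sqrt{(i+\tfrac12)^2-2j}$, the defining relation in the second alternative of \eqref{cond2} is equivalent to $j=(i+\tfrac12)r-\tfrac12 r^2$, so for each fixed integer $i\ge1$ the assignment $j\mapsto r_{i,j}$ is the restriction to integers of a strictly increasing function $r_i(\cdot)$ on $[0,i]$ with $r_i(0)=0$, $r_i(i)=1$, and derivative $\big((i+\tfrac12)-r\big)^{-1}\le(i-\tfrac12)^{-1}$. Given $r_0\in[0,\tfrac12]\cap\Q$, for each $n$ I set $i_n:=n$ and take $j_n$ to be the nearest integer to $j^\ast(n):=(n+\tfrac12)r_0-\tfrac12 r_0^2$; since $0\le r_0\le\tfrac12$ one checks $0\le j^\ast(n)\le\tfrac n2+\tfrac14$, hence $0\le j_n\le n$ for $n$ large. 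By \eqref{def_y} and \eqref{def_k} the point
\[
\big(x_1(n),y_1(n)\big):=\Big(n(n+1)-2j_n-2,\ \tfrac{n(n+3)}{2}-j_n+1\Big)
\]
then consists of nonnegative integers lying on $\gamma_{r_1(n)}$ with $r_1(n):=r_{n,j_n}$, and it satisfies the second alternative of \eqref{cond2} by construction, hence \eqref{cond1} by Lemma~\ref{lemma_curves}, i.e.\ $(x_1(n),y_1(n))\in A$. Finally $|r_1(n)-r_0|=|r_n(j_n)-r_n(j^\ast(n))|\le(n-\tfrac12)^{-1}|j_n-j^\ast(n)|\le\tfrac{1}{2n-1}\to0$, while $x_1(n)\ge n^2-n-2\to\infty$.

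For the sequence $\gamma_{r_2(n)}$ that lands in $\Z_{\ge0}^2\setminus A$, I would use lattice points with odd first coordinate. For $\ell\ge0$, since $8\ell+13\equiv5\pmod 8$ is never a square, $\sqrt{2\ell+\tfrac{13}{4}}=\tfrac12\sqrt{8\ell+13}$ is irrational, and the unique lattice point with $x=2\ell+1$ lying on a curve $\gamma_r$ with $r\in[0,1)$ is
\[
\big(2\ell+1,\ \ell+2+\big\lceil\tfrac12\sqrt{8\ell+13}\,\big\rceil\big),\qquad r^{\mathrm{odd}}(\ell):=\big\lceil\tfrac12\sqrt{8\ell+13}\,\big\rceil-\tfrac12\sqrt{8\ell+13}\in(0,1).
\]
By the analysis of odd $x$ in the proof of Lemma~\ref{lemma_curves}, this point satisfies \eqref{cond1} exactly when $r^{\mathrm{odd}}(\ell)\ge\tfrac12$, so it lies in $\Z_{\ge0}^2\setminus A$ precisely when $r^{\mathrm{odd}}(\ell)<\tfrac12$. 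It therefore suffices to produce $\ell_n\to\infty$ with $r^{\mathrm{odd}}(\ell_n)\to r_0$ and $r^{\mathrm{odd}}(\ell_n)<\tfrac12$: since $r^{\mathrm{odd}}(\ell)\ne\tfrac12$ for every $\ell$ (again because $8\ell+13$ is not an odd square), for $r_0=\tfrac12$ it is enough to approach $\tfrac12$ from below, and for $r_0<\tfrac12$ the strict inequality is automatic once $r^{\mathrm{odd}}(\ell_n)$ is close to $r_0$. One then sets $r_2(n):=r^{\mathrm{odd}}(\ell_n)$ and $x_2(n):=2\ell_n+1$.

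The step I expect to be the real obstacle is the density needed for the second sequence: one must show that $\{r^{\mathrm{odd}}(\ell):\ell\ge0\}=\{-\tfrac12\sqrt{8\ell+13}\bmod1:\ell\ge0\}$ meets every subinterval of $[0,1]$ for arbitrarily large $\ell$. (The analogous density for the first sequence is built into the construction, since for fixed $i$ the values $r_{i,0},\dots,r_{i,i}$ already cover $[0,1]$ with mesh $\le(i-\tfrac12)^{-1}$.) I would obtain it either from Weyl's equidistribution theorem applied to $\tfrac12\sqrt{8\ell+13}$, or elementarily: along $m=8\ell+13\to\infty$ the quantity $\tfrac12\sqrt m$ tends to $\infty$ while the successive gaps $\tfrac12\sqrt{m+8}-\tfrac12\sqrt m=4\big(\sqrt{m+8}+\sqrt m\big)^{-1}$ tend to $0$, so the fractional parts $\{\tfrac12\sqrt m\}$ wind around the circle with mesh tending to $0$ and hence come within any $\varepsilon>0$ of every point of $[0,1)$, infinitely often. (This is also why the lemma need only be stated for rational $r_0$: that is exactly what is used when Lemma~\ref{lemma:strip} is applied in Section~\ref{sec:dim4}, where a rational $r_0$ is chosen in the interior of each substrip interval.)
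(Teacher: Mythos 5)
Your proposal is correct, and for the second sequence it takes a genuinely different route from the paper. For the points inside $A$ your construction is essentially the paper's: both parametrize even-$x$ lattice points on the curves via the $(i,j)$-coordinates of \eqref{cond2} and use the even-$x$ case of Lemma~\ref{lemma_curves}; the paper exploits $r_0=\tfrac pq$ exactly by setting $(i,j)=(nq,np)$, whereas you take $i=n$ and round $j$ to the nearest integer, which costs you the error estimate $|r_1(n)-r_0|\le\tfrac1{2n-1}$ but works verbatim for irrational $r_0$ as well. The real divergence is in the second sequence: the paper again gives a closed-form family of odd-$x$ points, namely $(x_2(n),y_2(n))=(4n^2q^2+4nq-4np-1,\,2n^2q^2+4nq-2np+2)$ with $r_2(n)=i'+1-\sqrt{(i')^2+2i'-2j'+\tfrac54}$ for $(i',j')=(2nq,2np)$, whose $r$-values visibly converge to $p/q$, so no density statement about fractional parts is ever needed; this explicit dependence on $p,q$ is precisely why the lemma is stated only for rational $r_0$. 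You instead take \emph{all} odd-$x$ lattice points, reduce to showing that $\big\lceil\tfrac12\sqrt{8\ell+13}\big\rceil-\tfrac12\sqrt{8\ell+13}$ accumulates at $r_0$ from below, and settle it with the elementary ``unbounded sequence with gaps tending to zero'' winding argument (your observation that $8\ell+13\equiv5\pmod 8$ rules out the value $\tfrac12$ exactly, handling the endpoint $r_0=\tfrac12$, is a nice touch). Your version buys generality --- it proves the statement for all $r_0\in[0,\tfrac12]$, not just rational ones --- at the price of a non-constructive density step; the paper's version buys complete explicitness, which is all that is needed since the application in Theorem~\ref{thm:setA} only ever picks a rational $r_0$ inside each substrip.
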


\begin{proof}
Let $r_0 \in [0,\tfrac 1 2[$ be a rational number, $r_0=\tfrac p q, p,q \in \Z_{\ge0}$.
Let  $i=nq, j=np$ for some $n\in \Z_{\ge0},$
$ r_1(n)=i+\tfrac 1 2-\sqrt{(i+\tfrac 1 2)^2-2j}.$
Then $i,j \in \Z_{\ge0}, 0\le j< \tfrac i 2, r_1(n)\in[0,\tfrac 1 2[,
r_1(n)-r_0 \rightarrow 0 \text{ as } n \rightarrow \infty .$
If $n$ is an integer such that
 $n\ge \tfrac 1 q$, then $i\ge 1$
 and $0\le j \le i$.
Then, 
we have seen in Lemma~\ref{lemma_curves} that 
\[
	\big(x_1(n),y_1(n))\coloneqq (nq(nq+1)-2np-2,\tfrac{nq(nq+3)}{2}-np+1\big)
\]
 is an integer point with
$r_1(n)=y_1-\tfrac {x_1} 2 -\tfrac 3 2-\sqrt{x_1+\tfrac 9 4}\in [0,\tfrac 1 2[$ as $n\rightarrow \infty$. 
The point $(x_1(n),y_1(n))$ 
satisfies 
\[
	y_1(n)\ge \tfrac {x_1(n)} 2 +\big\lceil\sqrt{x_1(n)+\tfrac 9 4}+\tfrac 1 2\big\rceil+1,
\] 
which means it belongs to the set $A$ defined in \eqref{def:setA},
and $x_1(n)\rightarrow \infty$ as $n\rightarrow \infty$.

Now let
$i'=2nq, j'=2np$ for some  $n\in \Z_{\ge0},$
$r_2(n)=i'+1-\sqrt{(i')^2+2i'-2j'+\tfrac 5 4}$.
Then $i',j' \in \Z_{\ge0}, 0\le j'< \tfrac {i'} 2$ and
$r_2(n)-r_0 \rightarrow 0 \text{ as } n \rightarrow \infty.$
The point 
\[
	\big(x_2(n),y_2(n))\coloneqq (4n^2q^2+4nq-4np-1,2n^2q^2+4nq-2np+2\big)
\]
is an integer point with odd $x_2(n)$ and $r_2(n)=y_2-\tfrac {x_2} 2 -\tfrac 3 2-\sqrt{x_2+\tfrac 9 4}$
$\in [0,\tfrac 1 2[,$ where $x_2(n)\rightarrow \infty$ as $n\rightarrow \infty$.  
From Lemma~\ref{lemma_curves} it follows that
\[
	y_2(n)< \tfrac {x_2(n)} 2 +\big\lceil\sqrt{x_2(n)+\tfrac 9 4}+\tfrac 1 2\big\rceil+1,
\]
hence the point $(x_2(n),y_2(n))$ does not belong to the set $A$.
\end{proof}

\begin{theorem}\label{thm:setA}
The set
\[
	A\coloneqq \big\{(x,y) \in \Z_{\ge0}^2 : y\ge \tfrac x 2 +\big\lceil\sqrt{x+\tfrac 9 4}+
\tfrac 1 2\big\rceil+1\big\}
\]
is not an $\R$-semi-algebraic set of lattice points.
\end{theorem}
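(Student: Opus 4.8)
The plan is to combine the structural results just established with the Strip Lemma (Lemma~\ref{lemma:strip}). The affine lattice spanned by $A$ is all of $\Z^2$, since $A$ contains, for instance, all points $(0,y)$ with $y$ large together with nearby points off the vertical line, so $\aff_\Z A = \Z^2$ and the distinction between integer points and lattice points is vacuous here. Thus it suffices to exhibit a strip of algebraic curves $\mathcal C$ such that every substrip contains infinitely many points of $A$ and infinitely many points of $\Z^2\setminus A$.

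First I would fix the strip. Take $f(x) = \tfrac x2 + \sqrt{x+\tfrac 94}$ for $x\ge 0$ and the translation vector $c = (0, \tfrac 32 + \tfrac 12) = (0,2)$ — more precisely, set $\gamma_0$ to be the curve $y = \tfrac x2 + \sqrt{x+\tfrac 94}+\tfrac 32$ restricted to $x\ge 0$ and let $\gamma_t = \gamma_0 + t\,(0,1)$, so $\gamma_t$ is exactly the curve $\gamma_r$ of Lemma~\ref{gammacurves} with $r=t$. Since these are graphs of strictly increasing functions of $t$ over the same domain, the translates $\gamma_t$, $t\in[0,1]$, are pairwise disjoint, so $\mathcal C = \{\gamma_t\}_{t\in[0,1]}$ is a strip of algebraic curves in the sense of Definition~\ref{def:strip}.

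Next I would verify the substrip hypothesis. Let $\mathcal C_J$ be any substrip, $J = [\delta_0,\delta_1]$ with $\delta_0<\delta_1$. Pick any rational $r_0$ with $\delta_0 < r_0 < \min(\delta_1,\tfrac 12)$ (shrinking $J$ first if necessary so that $J\cap[0,\tfrac12[$ has nonempty interior; if $J$ lies entirely in $[\tfrac12,1]$ I would instead use the dual argument, see below). Lemma~\ref{gammacurves} then supplies two sequences $\gamma_{r_1(n)}$, $\gamma_{r_2(n)}$ with $r_1(n),r_2(n)\to r_0$, hence eventually lying in $J$, where $\gamma_{r_1(n)}$ carries an integer point of $A$ with $x$-coordinate tending to $\infty$ and $\gamma_{r_2(n)}$ carries an integer point of $\Z_{\ge0}^2\setminus A$ with $x$-coordinate tending to $\infty$. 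These give infinitely many points of $\Lambda\cap A$ and infinitely many of $\Lambda\setminus A$ inside $\mathcal C_J$ (distinctness of the points for distinct $n$ follows from the $x$-coordinates going to infinity). By Lemma~\ref{lemma:strip}, $A$ is not an $\R$-semi-algebraic set of lattice points.

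The only real subtlety — and the step I expect to need the most care — is the case where the given substrip $J$ is a subinterval of $[\tfrac12,1]$, for which Lemma~\ref{gammacurves} as stated produces no points. Here I would either (a) observe that, by Lemma~\ref{lemma_curves}, along $\gamma_r$ with $r\in[\tfrac12,1]$ the membership in $A$ is governed by whether $r$ has the special form $i+\tfrac12-\sqrt{(i+\tfrac12)^2-2j}$, and run a symmetric construction producing one sequence of $A$-points (from parameters of that special form, which are dense in $[0,1]$) and one sequence of non-$A$-points (from parameters not of that form, using that for odd $x$ a point on $\gamma_r$ lies in $A$ iff $r\in[\tfrac12,1]$ — wait, rather: for such $r$ we must instead exploit that generic nearby rational parameters fail the special form, giving non-membership for suitable even-$x$ points); or, more cleanly, (b) replace the translation vector so that the strip is $\{\gamma_r : r\in[0,\tfrac12]\}$ from the outset, i.e. take $c=(0,\tfrac12)$, so that every substrip meets $[0,\tfrac12[$ in a set with nonempty interior and Lemma~\ref{gammacurves} applies verbatim. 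Option (b) makes the argument immediate, so I would adopt it: with $\gamma_t = \gamma_0 + (0,\tfrac t2)$ for $t\in[0,1]$ the strip lies within the parameter range $[0,\tfrac12]$ covered by Lemma~\ref{gammacurves}, every substrip corresponds to an interval of positive length in $[0,\tfrac12]$, and the argument of the previous paragraph goes through without the case distinction.
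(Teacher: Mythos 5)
Your proof is correct and takes essentially the same route as the paper: the strip of curves $\gamma_r$ over the parameter range $[0,\tfrac12]$, combined with Lemma~\ref{gammacurves} to populate every substrip with infinitely many points of $A$ and of $\Z^2\setminus A$, and then the Strip Lemma~\ref{lemma:strip}. The only (immaterial) difference is that the paper first splits $A$ into the semi-algebraic part $A_1$ (the points on or above $\gamma_{1/2}$) and the remainder $A_2$ and applies the Strip Lemma to $A_2$, whereas your option~(b) applies it directly to $A$ with the strip confined to $[0,\tfrac12]$, which equally avoids the case $r\in[\tfrac12,1]$.
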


\begin{proof}
It follows from the proof of Lemma \ref{lemma_curves} that
$A$ can be written as the disjoint union of the sets $A_1$ and $A_2$, where
\begin{eqnarray*}
A_1 &\coloneqq& \Big\{(x,y) \in \Z_{\ge0}^2 : y\ge \tfrac x 2 +\sqrt{x+\tfrac 9 4}+2\Big\}\\
\noalign{\noindent and}
A_2 &\coloneqq& \Big\{(x,y) \in \Z_{\ge0}^2 : y= \tfrac x 2 +\sqrt{x+\tfrac 9 4}+\tfrac 3 2+r,
\quad r \in[0,\tfrac 1 2[,\\
&&\hphantom{\{(x,y) \in \Z_{\ge0}^2 : }\ r= i+\tfrac 1 2-\sqrt{(i+\tfrac 1 2)^2-2j}\\
&&\hphantom{\{(x,y) \in \Z_{\ge0}^2 : }\ 
\text{for some }i,j \in \Z_{\ge0},\ i\ge 1,\ 0\le j \le i\Big\}.
\end{eqnarray*}
The affine lattice spanned by $A$ is $\Lambda=\aff_{\Z}A=\aff_{\Z}A_1=\aff_{\Z}A_2=\Z^2$.
The set $A_1$ is the intersection of $\Z^2$ with the semi-algebraic set
\begin{eqnarray*}
S_1 &\coloneqq& \{(x,y)\in \R^2 : x,y \ge 0, y\le \tfrac x 2 +\sqrt{x+\tfrac 9 4}+2\}\\
    & =       & \{(x,y)\in \R^2 : x,y \ge 0, \tfrac{x^2}{4}+y^2-xy+x-4y+\tfrac 7 4\le 0\}.
\end{eqnarray*}
If there were a semi-algebraic set $S$ such that $A=S\cap \Z^2$, then
$S_2\coloneqq S\backslash S_1$ would be a semi-algebraic set and $A_2=S_2\cap \Z^2$. 
We will show that there is no such semi-algebraic set $S_2$ and hence no semi-algebraic set $S$.

Let $\gamma_r$ denote the curve $y= \tfrac x 2 +\sqrt{x+\tfrac 9 4}+\tfrac 3 2+r$.
For given $r$, $y\ge 0$ and $x\ge r-\tfrac 9 4$, 
$\gamma_r$
is a semi-algebraic set:
\[
	\gamma_r=\{(x,y)\in \R^2:\tfrac {x^2}{4}+y^2-xy+(r+\tfrac 1 2)x-(2r+3)y+r(r+3)=0\}.
\]
Set $A_2$ can now be written as
\begin{align*}
	A_2=
\big\{(x,y) \in \Z_{\ge0}^2\cap \gamma_r : r \in[0,\tfrac 1 2[, \, r= i+\tfrac 1 2-\sqrt{(i+\tfrac 1 2)^2-2j}&\\
	 \text{ for some }
i,j \in \Z_{\ge0}, i\ge 1, 1\le j \le i\big\}&.
\end{align*}
Consider an interval $[r,r+\varepsilon]\subset[0,\tfrac 1 2 ]$.
Take a rational $r_0\in \, ]r,r+\varepsilon[$.
By Lemma~\ref{gammacurves},
there exist infinitely many integer points both from $A_2$ and from $\R^2/A_2$ with arbitrarily high $x$-coefficient in the interval $[r,r+\varepsilon]$.
By Lemma~\ref{lemma:strip}, this implies that $A_2$ 
and hence $A$ cannot be the intersection of $\Z^2$ with any semi-algebraic set.
\end{proof}

Theorem~\ref{thm:setA} implies Theorem~\ref{thm:Pi4_12}: $\Pi^4_{12}$ is not a semi-algebraic set of lattice points.

\section{The set \texorpdfstring{$\boldsymbol{\Fset{d}{}}$}{Fd} for dimensions 6 and higher}\label{sec:dim6}

In this section we prove Theorem~\ref{thm:Fd}.
For this we need the notion of $g$-vectors of simplicial polytopes:
The \emph{$g$-vector} $g(P)\in \Z^{\lfloor\frac{d}{2}\rfloor+1}$ of a simplicial polytope is 
obtained from the $f$-vector $f(P)$ with
\[
	g_k= 
	\displaystyle \sum_{i=0}^k (-1)^{k-i}{{d-i+1}\choose{d-k+1}}f_{i-1} \
\text{ for } 0\ge k\ge \lfloor\frac{d}{2}\rfloor.
\]

\begin{definition}
Let $G^d$ denote the set of $g$-vectors of simplicial $d$-polytopes and 
let $G_{ij}^d$ denote the projection of the set of $g$-vectors of simplicial $d$-polytopes to the 
coordinates $i$ and $j$,
\[G_{ij}^d\coloneqq\big\{(g_i(P),g_j(P)) : P \text{ is a simplicial $d$-polytope}\big\}.\]
\end{definition}

\begin{lemma}\label{lemma:g2g3}
The set 
$G_{23}^d$
 is not an $\R$-semi-algebraic set of lattice points for $d\ge 6$.
\end{lemma}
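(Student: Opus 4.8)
The plan is to reduce the statement about $G^d_{23}$ to the two-dimensional result already at our disposal, Theorem~\ref{thm:setA} together with the Strip Lemma (Lemma~\ref{lemma:strip}). The key observation is that by the $g$-Theorem, for $d\ge 6$ the pair $(g_2,g_3)$ of a simplicial $d$-polytope is subject only to the Macaulay-type constraints: $g_2\ge 0$, and $g_3$ lies between $0$ and the maximal growth allowed by the Kruskal--Katona/Macaulay bound from $g_2$, namely $0\le g_3\le g_2^{\langle 2\rangle}$ (and any such lattice pair with $g_2\ge 1$ is attained, since higher $g_i$ may be taken to be $0$). So $G^d_{23}$ is, up to the trivial points with small $g_2$, exactly the set of integer points $(u,v)$ with $u\ge 1$ and $0\le v\le u^{\langle 2\rangle}$. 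The affine lattice it spans is $\Z^2$. The first step is therefore to record this explicit description and to note that it is independent of $d$ for $d\ge 6$ — this is where the hypothesis $d\ge 6$ enters, as one needs at least the coordinates $g_2,g_3$ to be free, i.e. $\lfloor d/2\rfloor\ge 3$.

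The second step is to rewrite the boundary curve $v=u^{\langle 2\rangle}$ in closed form. Writing $u=\binom{a_2}{2}+\binom{a_1}{1}$ in its $2$-canonical representation, one has $u^{\langle 2\rangle}=\binom{a_2+1}{3}+\binom{a_1+1}{2}$; parametrizing $a_2$ by an integer and letting the ``remainder'' $a_1$ vary produces exactly a family of parabola-like algebraic arcs. The point is that this is the same combinatorial shape as the region $A$ of Theorem~\ref{thm:setA}: an integer region lying between a main algebraic curve $y=\tfrac x2+\sqrt{x+\tfrac94}+2$ and a family of shifted sub-curves $\gamma_r$, with the membership of lattice points on the intermediate curves governed by a quadratic-irrationality condition on the shift $r$. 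I would make this precise by exhibiting an explicit affine-linear (or at worst polynomial, lattice-preserving) change of coordinates — or, more robustly, a semi-algebraic bijection that restricts to a bijection of the lattices — carrying $G^d_{23}$ (minus finitely many points) onto $A$ (minus finitely many points). Since the class of $\R$-semi-algebraic sets of lattice points is preserved under such maps and under adding/removing finitely many points, Theorem~\ref{thm:setA} then transfers.

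Alternatively, and perhaps more cleanly, one can argue directly, mirroring the proof of Theorem~\ref{thm:setA}: the ``easy'' part of $G^d_{23}$, namely $\{(u,v):0\le v\le \binom{a_2+1}{3}\text{ for the appropriate }a_2\}$ — is cut out by a semi-algebraic inequality, so if $G^d_{23}$ were semi-algebraic then so would be the complementary sliver consisting of the lattice points on the family of sub-arcs $\gamma_r$ with admissible $r$; then invoke the Strip Lemma with the strip of curves $\{\gamma_r\}_{r\in[0,1]}$, checking that every substrip contains infinitely many lattice points with admissible $r$ (those survive in $G^d_{23}$) and infinitely many with inadmissible $r$ (those do not), exactly as in Lemma~\ref{gammacurves}. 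The main obstacle is the bookkeeping in this last step: one must verify that the arithmetic condition characterizing which lattice points on $\gamma_r$ actually occur as $(g_2,g_3)$ — i.e. the condition $0\le v\le u^{\langle 2\rangle}$ expressed through the canonical representation — really does oscillate densely (in the sense of the Strip Lemma) within every substrip, which requires re-running the Diophantine analysis of Lemma~\ref{lemma_curves} in the slightly different coordinates coming from Macaulay's bound. I expect the cleanest writeup is the transfer approach: pin down the change of variables so that the Macaulay boundary becomes literally (a finite modification of) the curve family of Section~\ref{sec:dim4}, and then quote Theorem~\ref{thm:setA} verbatim.
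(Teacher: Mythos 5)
Your overall instinct---describe $G_{23}^d$ explicitly via the $g$-theorem and then apply the strip machinery---is the right one, but the route you actually advocate (transfer to Theorem~\ref{thm:setA} by a lattice-preserving change of coordinates) has a genuine gap. No such map is exhibited, and the simple forms you propose cannot work: the boundary of $A$ is a conic, $(y-\tfrac x2-2)^2=x+\tfrac94$, with $y\sim\tfrac x2$, whereas the Macaulay boundary of $G_{23}^d$ is asymptotic to $\tfrac{\sqrt2}{3}u^{3/2}$ (a cubic curve, namely $(3v-\tfrac32 u)^2=u^2(2u+\tfrac14)$), so no affine map carries one region to the other; a polynomial shear $(u,v)\mapsto(u,v+p(u))$ cannot fix this either, and even at the level of fine structure the two sets differ---the ``fuzzy zone'' of $A$ between the smooth curve and the ceiling-staircase has bounded vertical width, while the Macaulay staircase of $G_{23}^d$ deviates from any smooth curve by an amount growing like $\sqrt{u}$. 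Whether \emph{some} semi-algebraic bijection preserving $\Z^2$ exists is exactly the kind of question this paper shows to be delicate, so it cannot be waved through; as written, the main step of your plan is unsupported and quite possibly false.

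Your fallback (apply the Strip Lemma directly) is essentially what the paper does, but you leave the key oscillation verification as an admitted open step and you overestimate what it requires: no analogue of the Diophantine analysis of Lemma~\ref{lemma_curves} is needed. The paper takes the strip bounded by the curve $\gamma_0\colon g_3=\tfrac12 g_2+\tfrac13 g_2\sqrt{2g_2+\tfrac14}$ through the points $\bigl(\binom{k}{2},\binom{k+1}{3}\bigr)$ and its translate by $(1,1)$, and observes that in the column $g_2=\binom{k}{2}$ every lattice point of the strip satisfies $\partial^3(g_3)\le g_2$ (hence lies in $G_{23}^d$), while in the column $g_2=\binom{k}{2}+1$ every lattice point of the strip violates it; since the number of strip points per such column grows like $\sqrt{g_2}$, every substrip picks up infinitely many points of each kind, and Lemma~\ref{lemma:strip} applies. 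The large jumps of the Macaulay staircase at $g_2=\binom{k}{2}$ make the argument far more elementary than the one you anticipate having to re-run. To repair your writeup, drop the transfer step and carry out this direct column-counting argument.
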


\begin{proof}
The $g$-theorem (Billera \& Lee \cite{BiLe1} \cite{BiLe2} and Stanley \cite{Sta3})
gives us
\[
	G_{23}^d=\big\{(g_2(P),g_3(P))\in \Z^2 : g_2,g_3\ge 0, \partial^3(g_3)\le g_2\big\}.
\]
Here, 
\[
	\partial^3(g_3)\coloneqq{n_3-1 \choose 2}+\ldots+{n_i-1 \choose i}
\]	 
where $n_i,\ldots ,n_3$ are the unique integers such that
$1\le i \le n_i<\ldots<n_3$ and
\[
	g_3={n_3\choose 3}+\ldots+{n_i\choose i}.
\]
See Figure~\ref{plot:g2g3}.

\begin{figure}[htp]
  \centering
  \includegraphics[scale=.33]{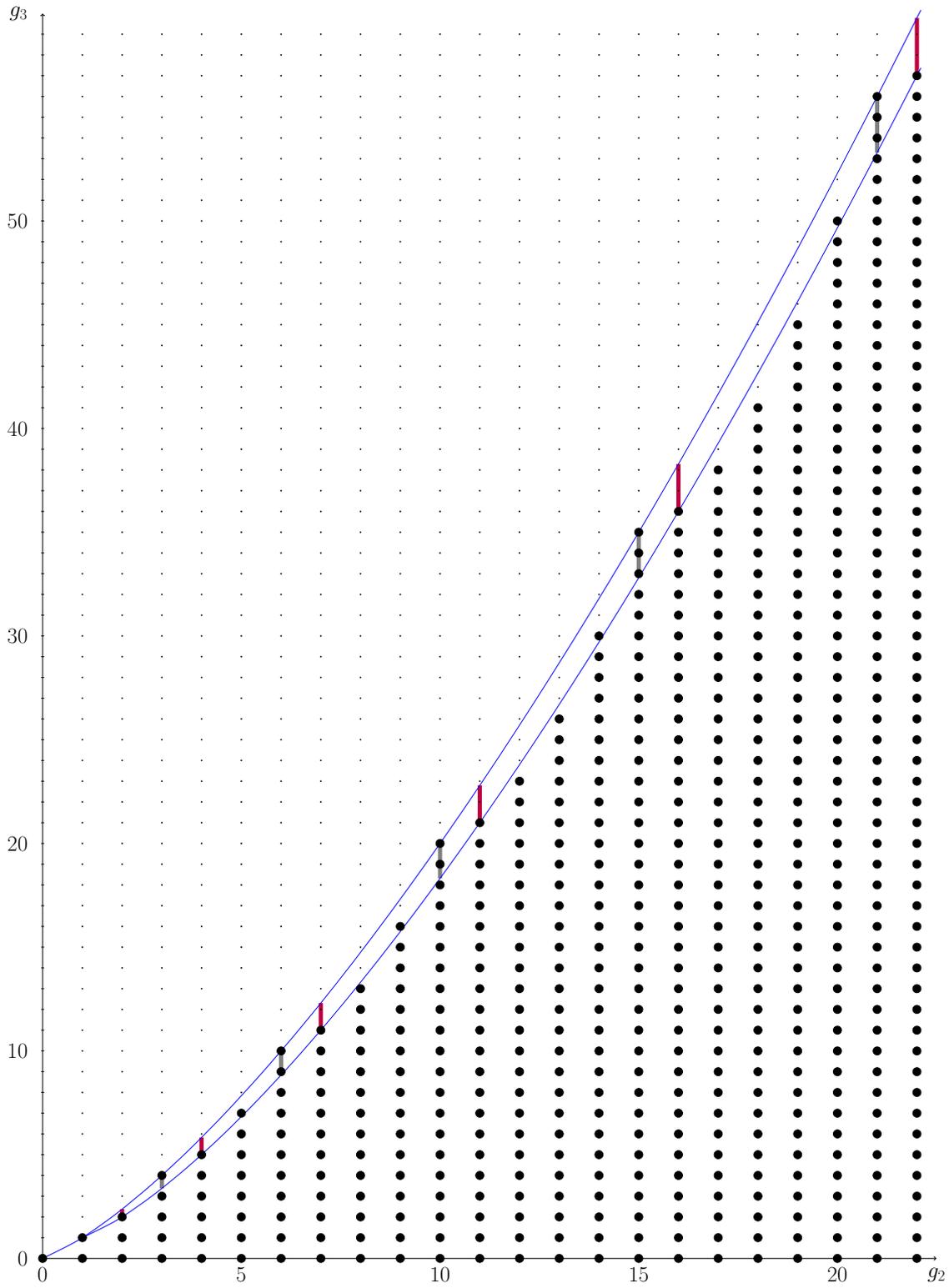}
\caption{The set $G_{23}^d$, $d\ge 6$}
\label{plot:g2g3}
\end{figure}

The affine lattice is $\Lambda=\Z^2.$
We show that this set is not an $\R$-semi-algebraic set of lattice points.
We will do this by considering the strip between 
the curve 
\[
	\gamma_0: g_3=\tfrac 1 2 g_2+\tfrac 1 3 g_2\sqrt{2g_2+\tfrac 1 4}
\]
through the points $({k \choose 2},{k+1 \choose 3})$ for $k\in \Z_{\ge0}$,
and the same curve, shifted by the vector $(1,1)\in \R^2$,
\[
	\gamma_1: g_3=\tfrac 1 2(g_2-1) +\tfrac 1 3 (g_2-1)\sqrt{2(g_2-1)+\tfrac 1 4}+1.
\]
We look at the points with $g_2={k \choose 2}$ and $g_2={k \choose 2} +1$ for any integer $k\ge 2$.
Observe
that points with $g_2={k \choose 2}$ in the strip satisfy
$\partial^3(g_3)\le g_2$ and
points with $g_2={k \choose 2}+1$ in the strip satisfy
$\partial^3(g_3)> g_2.$  
Additionally,
if $k\rightarrow \infty$, the number of points with $g_2={k \choose 2}$ 
and with $g_2={k \choose 2}+1$
in the strip goes to infinity.
By Lemma~\ref{lemma:strip}
this implies that the strip, and hence the whole set $G_{23}^d$, is not a semi-algebraic set of lattice points.
\end{proof}

Now we are ready  to prove Theorem~\ref{thm:Fd}:

\begin{proof}[Proof of Theorem~\ref{thm:Fd}]
The projection set $G_{23}^d$ is not semi-algebraic by Lemma~\ref{lemma:g2g3}.
This projection appears in the restriction of the set $G^d$ 
to $g_1\coloneqq g_2$ and $g_i\coloneqq 0$ for all $4\le i\le \lfloor\frac{d}{2}\rfloor$.
Therefore $G^d$ is not an $\R$-semi-algebraic set of lattice points
The transformation from the $g$-vector to the $f$-vector is unimodular.
Hence the set $\Fset{d}{s}$ of $f$-vectors of simplicial $d$-polytopes 
is not a semi-algebraic set of lattice points, for any $d\ge 6.$
The set $\Fset{d}{}$ of $f$-vectors of all $d$-polytopes 
is not a semi-algebraic set of lattice points, because its restriction to 
$2f_{d-2}=df_{d-1}$, 
the set of $f$-vectors of simplicial polytopes, 
is not a semi-algebraic set of lattice points.
\end{proof}

\subsection*{Acknowledgements}
Thanks to Isabella Novik and to an anonymous referee for insightful comments.

\newcommand{\SortNoop}[1]{}

\end{document}